\newcommand{\defi}[1]{\textsf{#1}} 
\newcommand{\C}{\mathbb{C}}
\newcommand{\F}{\mathbb{F}}
\newcommand{\Q}{\mathbb{Q}}
\newcommand{\R}{\mathbb{R}}
\newcommand{\Z}{\mathbb{Z}}
\newcommand{\Kbar}{{\overline{K}}}
\newcommand{\mm}{\mathfrak{m}}
\newcommand{\calD}{\mathcal{D}}
\newcommand{\calO}{\mathcal{O}}
\DeclareMathOperator{\Aut}{Aut}
\DeclareMathOperator{\Char}{char}
\DeclareMathOperator{\Gal}{Gal}
\DeclareMathOperator{\sgn}{sgn}
\newcommand{\injects}{\hookrightarrow}
\newcommand{\intersect}{\cap} 
\newcommand{\Intersection}{\bigcap} 
\newcommand{\isom}{\simeq}
\newcommand{\surjects}{\twoheadrightarrow}
\newcommand{\To}{\longrightarrow}
\newcommand{\union}{\cup} 
\newcommand{\Union}{\bigcup} 
\newcommand{\isomto}{\overset{\sim}{\rightarrow}}
\newtheorem{theorem}{Theorem}[section]
\newtheorem{lemma}[theorem]{Lemma}
\newtheorem{corollary}[theorem]{Corollary}
\theoremstyle{definition}
\newtheorem{example}[theorem]{Example}
\theoremstyle{remark}
\newtheorem{remark}[theorem]{Remark}
\g@addto@macro\bfseries{\boldmath} 
\begin{document}

\title{Local arboreal representations}
\subjclass[2010]{Primary 11S82; Secondary 11F80, 11S15, 37P05, 37P20}
\keywords{Arboreal representation, ramification groups}

\author{Jacqueline Anderson}
\address{Mathematics Department, Bridgewater State University, Bridgewater, MA 02325, USA}
\email{jacqueline.anderson@bridgew.edu}

\author{Spencer Hamblen}
\address{McDaniel College, 2 College Hill, Westminster, MD 21157}
\email{shamblen@mcdaniel.edu}

\author{Bjorn Poonen}
\thanks{B.P.\ was supported in part by National Science Foundation grants DMS-1069236 and DMS-1601946 and grants from the Simons Foundation (\#340694 and \#402472 to Bjorn Poonen).  This article was originally published in \href{https://doi.org/10.1093/imrn/rnx054}{\emph{IMRN} \textbf{2018}, no.~19, 5974--5994}.}
\address{Department of Mathematics, Massachusetts Institute of Technology, Cambridge, MA 02139-4307, USA}
\email{poonen@math.mit.edu}
\urladdr{\url{http://math.mit.edu/~poonen/}}

\author{Laura Walton}
\address{Mathematics Department, Brown University, Providence, RI 02912, USA}
\email{laura@math.brown.edu}
\date{March 6, 2017}

\begin{abstract}
Let $K$ be a field complete with respect to a discrete valuation $v$
of residue characteristic~$p$.
Let $f(z) \in K[z]$ be a separable polynomial of the form $z^\ell-c$.
Given $a \in K$, we examine the Galois groups and ramification groups
of the extensions of $K$ generated by the solutions to $f^n(z)=a$.
The behavior depends upon $v(c)$, and we find that it shifts dramatically
as $v(c)$ crosses a certain value: $0$ in the case $p \nmid \ell$, 
and $-p/(p-1)$ in the case $p=\ell$.
\end{abstract}

\maketitle

\section{Introduction}\label{S:introduction}

\subsection{Arboreal Galois representations}
\label{S:1.1}

Let $K$ be a field.
Choose an algebraic closure $\Kbar$.
Let $f(z)$ be a polynomial of degree $\ell$ over $K$.
For $n \ge 0$, let $f^n$ denote the $n$th iterate $f \circ f \circ \cdots \circ f$.
Fix $a \in K$.
For $n \ge 0$,
let $f^{-n}(a)$ be the multiset of solutions to $f^n(z)=a$ in $\Kbar$,
so $\#f^{-n}(a)=\ell^n$;
also let $K_n = K(f^{-n}(a)) \subseteq \Kbar$.
Let $K_\infty = \Union_{n \ge 1} K_n$.
For $0 \le n \le \infty$, 
let $G(n) = \Aut(K_n/K)$.

Let $n \in \{0,1,2,\ldots,\infty\}$.
Let $T_n$ be the complete $\ell$-ary rooted tree of height $n$ (so there are $\ell^n$ leaves at the top);
here $T_\infty$ is the increasing union of $T_1 \subset T_2 \subset \cdots$.
The disjoint union of the $f^{-m}(a)$ for $m \le n$,
with an edge from $\alpha$ to $f(\alpha)$ for each vertex $\alpha$ other than the root,
is isomorphic to $T_n$.
\emph{For the rest of the paper, we suppose that for each $n \in \Z_{\ge 0}$, the solutions to $f^n(z)=a$ are distinct.}
Then these solutions lie in the separable closure $K_s$ of $K$ in $\Kbar$, 
and $\Gal(K_s/K)$ acts on this copy of $T_n$.
This defines a continuous homomorphism $\rho_n \colon \Gal(K_s/K) \to \Aut T_n$.
The image of $\rho_n$ is isomorphic to $G(n)$.
A continuous homomorphism $\Gal(K_s/K) \to \Aut T_\infty$
is called an \defi{arboreal Galois representation} \cite{Boston-Jones2007}*{Definition~1.1}.

There is a large literature studying the image of $\rho_\infty$ 
for various polynomials over global fields \cites{Odoni1985a,Odoni1985b,Stoll1992,Odoni1997,Boston-Jones2007,Jones2008,Boston-Jones2009,Jones2013,Hindes2016},
and occasionally also for rational functions \cite{Jones-Manes2014}.

\begin{example}
\label{E:Odoni}
Let $K=\Q$ and $f(z)=z^2-z+1$ and $a=0$.
Then $\rho_\infty$ is surjective \cite{Odoni1985a}*{Theorem~1}.
\end{example}

\begin{example}
\label{E:Stoll}
Let $K=\Q$.
Let $b \in \Z$ be such that either $b>0$ and $b \equiv 1,2 \pmod 4$,
or $b<0$ and $b \equiv 0 \pmod 4$ and $-b$ is not a square.
Let $f(z)=z^2+b$ and $a=0$.
Then $\rho_\infty$ is surjective \cite{Stoll1992}.
\end{example}

\subsection{Local fields}
\label{S:local fields}

{}From now on, $K$ is a field that is complete with respect to a discrete valuation $v$.
Let $k$ be the residue field.
Let $p$ be the characteristic of $k$.
Extend $v$ to $K_s$.

Consider $f(z) \colonequals z^\ell-c \in K[z]$ for some $\ell \ge 2$
and $c \in K^\times$.
Outside Section~\ref{S:image of infinite index}, we assume additionally that we are in one of the following cases:
\begin{itemize}
\item (``Tame case'') $\ell$ is not divisible by $p$;
\item (``Wild case'') $\ell=p$ and $K$ is a finite extension of $\Q_p$; in this case we normalize $v$ so that $v(p)=1$.
\end{itemize}
In particular, $f$ is separable.

In contrast with the situation over global fields in Examples \ref{E:Odoni} and~\ref{E:Stoll},
our Theorem~\ref{T:infinite index} will imply that
over a local field $K$ with finite residue field, 
the arboreal representation associated to a separable polynomial $f(z)=z^\ell-c$
as above is \emph{never} surjective, and never even of finite index.
Ingram proved a related result when $K$ is a finite extension of $\Q_p$. In this setting, he showed that if
$f \in K[x]$ is a monic polynomial with good reduction and degree not divisible by $p$,
and $a \in K$ is such that $f^n(a) \to \infty$ as $n \to \infty$, 
then the image of $\Gal(K_s/K)$ is of finite index in a particular
infinite index subgroup of $\Aut T_\infty$ \cite{Ingram2013}*{Theorem~1}.

In this introduction, we describe our main results in the wild case; 
the results in the tame case are similar but easier.
It turns out that in the wild case there is a dramatic shift of behavior as $v(c)$ crosses $-p/(p-1)$:

\begin{theorem}
Suppose that $K$ is a finite extension of $\Q_p$, and $\ell=p$.
\begin{enumerate}[\upshape (a)]
\item If $v(c) < -p/(p-1)$, then $K_\infty/K$ is a finite extension.
\item If $v(c) = -p/(p-1)$, then $K_\infty/K$ is an infinite extension, 
and $K_\infty/K$ is finitely ramified if and only if 
$a$ lies within the closed unit disk centered at a fixed point of $f$.
\item If $v(c) > -p/(p-1)$, then $K_\infty/K$ is infinitely wildly ramified.
\end{enumerate}
\end{theorem}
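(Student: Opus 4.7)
The plan is to study the iteration $\alpha_n^p = c + \alpha_{n-1}$ (with $\alpha_0 = a$ and each $\alpha_n$ ranging over the preimages $f^{-1}(\alpha_{n-1})$) by tracking both the valuation $v(\alpha_n)$ and the ramification of the step $K(\alpha_n)/K(\alpha_{n-1})$, using Newton polygons, Hensel's lemma, and the local $p$-th power criterion. A first orienting step is to locate the fixed points of $f$, the roots of $z^p - z = c$: when $v(c) < 0$, a Newton polygon computation shows that all $p$ fixed points have valuation $v(c)/p$. The multiplier at a fixed point $z_0$ is $f'(z_0) = p z_0^{p-1}$, with valuation $v(p) + (p-1)v(c)/p$; normalizing $v(p) = 1$, this vanishes precisely at $v(c) = -p/(p-1)$. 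The trichotomy of the theorem is therefore exactly the repelling/neutral/attracting trichotomy for fixed points.

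For part~(a), $v(c) < -p/(p-1)$: every fixed point is repelling, so inverse iteration contracts toward $z_0$. Writing $\alpha_n = z_0 + w_n$, the linearization $f'(z_0)\, w_n + O(w_n^2) = w_{n-1}$ together with Hensel's lemma in the variable $w$ produces, once $v(w_{n-1})$ is sufficiently large, a unique preimage in $K(\alpha_{n-1}, z_0)$ close to $z_0$. The $p-1$ other preimages of $\alpha_{n-1}$ differ from this one by multiplication by $\zeta_p^k$. A single finite extension $L \supseteq K(z_0, \zeta_p)$ (together with finitely many initial ``pre-stable'' preimages) then captures every $\alpha_n$, so $K_\infty \subseteq L$ and $K_\infty/K$ is finite.

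For part~(c), $v(c) > -p/(p-1)$: the fixed points are attracting for forward iteration and inverse iterates escape any neighborhood of them. I would prove by induction on $n$ that $z^p - (c + \alpha_{n-1})$ defines a totally wildly ramified degree-$p$ extension of $K(\alpha_{n-1})$; the key input is a Newton polygon / Eisenstein computation over $K(\alpha_{n-1})$ identifying a uniformizer of $K(\alpha_n)$ and bounding the contribution to the different from below by a fixed positive amount. Summing over $n$ the conductor grows without bound, so $K_\infty/K$ is infinitely ramified.

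Part~(b), $v(c) = -p/(p-1)$, is the delicate neutral case where $|f'(z_0)| = 1$ and first-order linearization alone is inconclusive. If $a$ lies in the closed unit disk about some fixed point $z_0$, the substitution $z = z_0 + w$ keeps inverse iteration inside the unit disk, and the reduction modulo $\mm$ becomes an Artin--Schreier-type recursion $w^p + (\text{unit})\cdot w \equiv (\text{const}) \pmod{\mm}$ on $k$; each step is at worst an unramified degree-$p$ extension, so ramification stays uniformly bounded while the unramified tower is genuinely infinite when $k$ is finite. If $a$ lies outside every such disk, the Newton polygon argument of part~(c) applies at each stage and yields infinite ramification. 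I expect the main obstacle to be precisely this neutral case: neither the linear term $f'(z_0)\, w$ nor the $p$-th power term $w^p$ dominates the other, so one must retain the full polynomial expansion near $z_0$ and analyze the residual Artin--Schreier dynamics on $k$ uniformly in $n$ in order to separate the finitely-ramified and infinitely-ramified sub-cases and to establish infinitude of $K_\infty/K$ when $k$ is finite.
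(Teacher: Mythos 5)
Your dynamical framing---repelling, neutral, attracting fixed point according as $v(c)$ is less than, equal to, or greater than $-p/(p-1)$---is exactly the right conceptual picture, and the conjugation $z = z_0 + w$ is precisely the device the paper uses for part~(b). But the sketch has gaps in parts~(b) and~(c) that are not just details to be filled in.

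For part~(c) your plan is to prove by induction that $z^p - (c+\alpha_{n-1})$ gives a totally wildly ramified degree-$p$ extension of $K(\alpha_{n-1})$ and then sum different contributions. This per-step claim is not justified and is not obviously true: $v(\alpha_n) = v(c)/p$ may already lie in the value group of $K(\alpha_{n-1})$, so the Newton polygon of $z^p-(c+\alpha_{n-1})$ does not force total ramification at that step, and nothing a priori rules out $c + \alpha_{n-1}$ being a $p$th power in $K(\alpha_{n-1})$. The paper sidesteps this entirely by working with \emph{two} preimage chains $(\alpha_n)$ and $(\beta_n)$ and tracking $d_n = \beta_n - \alpha_n$: a single application of the Newton polygon lemma shows $v(d_1) = v(c)/p + 1/(p-1) > 0$, and then $v(d_n) = v(d_1)/p^{n-1}$ by an induction that only uses the ``small $v(d)$'' branch of the lemma. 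Since $d_n \in K_n$, the $p$-power in the denominator of $v(d_n)$ forces the ramification index of $K_n/K$ to grow without bound, with no claim whatsoever about individual steps in the tower. You should replace your per-step different estimate with this global difference-tracking argument (or prove the per-step claim carefully, which I expect will be harder than the problem it is meant to solve).

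For part~(b) you correctly identify the neutral case as the obstacle, but the sketch stops where the real work begins. Two things are needed. First, infinitude of $K_\infty/K$ when $k$ is finite: the paper establishes this by showing that for distinct $\alpha_n,\beta_n \in f^{-n}(a)$ one has $v(\alpha_n - \beta_n) = 0$ with pairwise distinct residues, so $\#k_n \ge p^n$ and $G(\infty)/I(\infty)$ is infinite (in fact $\cong \Z_p$ after adjoining $\mu_p$); ``the residual Artin--Schreier dynamics is infinite'' needs this quantitative input, not just the observation that the reduced polynomial is separable. Second, the dichotomy on $v(a-b)$: when $v(a-b) \ge 0$ your sketch is essentially the paper's (the conjugated polynomial $g(x) = f(x+b)-b$ has integral coefficients and separable reduction $x^p + ux$, $u \in k^\times$, so all steps are unramified), but when $v(a-b) < 0$ you cannot simply invoke ``the Newton polygon argument of part~(c)'': the clean route, as in the paper, is to observe that $g(x) = \alpha$ with $v(\alpha) < 0$ forces $v(x) = v(\alpha)/p$, so $v(\alpha_n - b) = v(a-b)/p^n$ and the ramification index again grows as a power of $p$. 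Your part~(a) argument is fine in outline and is a legitimate reorganization of the paper's recursion on $q_m = v(\alpha_m - \alpha_{m-1})$, though you should verify the Hensel estimate $v(w_{n-1}) > 2\,v(f'(z_0))$ eventually holds by treating the two regimes (where the linear term dominates and where $w^p$ dominates) separately.
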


In fact, our results are more precise.
For example:
\begin{itemize}
\item
If $v(c) < -p/(p-1)$ and $v(a) > v(c)/p$ and $\mu_p \subseteq K$, 
then there exists $n$ depending on $v(c)$ and there exists $\alpha \in f^{-n}(a)$
such that $K_\infty = K(\alpha)$ (generated by one element!)\ 
and $G(\infty)$ is an elementary abelian $p$-group of order at most $p^n$
(Theorems \ref{T:elementary abelian p-group} and \ref{T:cutoff points}).
\item
If $v(c) = -p/(p-1)$, then some upper numbering ramification subgroup 
of $G(\infty)$ is trivial (Theorem~\ref{T:trivial ramification group for -p/(p-1)};
see also Example~\ref{E:shallow ramification}).
This contrasts with Sen's filtration theorem: see Remark~\ref{R:Sen}.
\item
If $v(c) = -p/(p-1)$ and $v(a)>v(c)$ and $\mu_p \subset K$, 
then the inertia subgroup $I(\infty)$ of $G(\infty)$
is either $\{1\}$ or $(\Z/p\Z)^\infty$ (Theorem~\ref{T:-p/(p-1)}).
\item
If $v(c) < 0$, Theorem~\ref{T:group size} provides a nontrivial upper bound on 
the asymptotic rate of growth of $[K_n:K]$.
\end{itemize}
The lack of deep ramification, at least when $v(c) \le -p/(p-1)$, 
contrasts with the expectation in 
an early study of ramification in arboreal representations \cite{Aitken-Hajir-Maire2005}*{p.~858}
that preimage trees of a generic polynomial of degree divisible by $p$ should be deeply ramified; 
see also \cite{Cullinan-Hajir2012} 
for other results on ramification in arboreal representations, also for rational functions.

\begin{remark}
Given $f$ over a global field $K$, the images of the associated local arboreal representations
give lower bounds on the global arboreal representation.
One might hope that these could be used to prove surjectivity of the global arboreal representation,
but so far the arguments in the literature that have been used to prove global surjectivity
(such as in \cite{Stoll1992}) have used a mix of local and global arguments.
\end{remark}

\subsection{Outline of the paper}

Section~\ref{S:image of infinite index} shows that the image of an arboreal representation
over a local field has infinite index, whether or not it arises from iterates of a polynomial.
Section~\ref{S:general lemmas} proves some general lemmas used throughout the rest of the paper.
The Galois groups $G(n)$ and $G(\infty)$ depend on whether $v(c)$ is negative,
and in the wild case also on whether $v(c) < -p/(p-1)$.
Sections \ref{S:sufficiently negative valuation} to~\ref{S:nonnegative valuation}
describe these groups; the section titles refer to the valuation of $c$.
Finally, in Section~\ref{S:real case}, 
we determine $K_\infty$ completely 
in the analogous situation with $K = \R$.

\section{Images of local arboreal representations}
\label{S:image of infinite index}

\begin{theorem}
\label{T:infinite index}
Let $K$ be a field that is a complete with respect to a discrete valuation $v$ with finite residue field $k$.
Assume that $\Char K \ne 2$.
Let $d \ge 2$, and let $T_\infty$ be the infinite $d$-ary rooted tree defined in Section~\ref{S:1.1}.
Then the image of any continuous homomorphism $\rho_\infty \colon \Gal(K_s/K) \to \Aut T_\infty$ 
is of infinite index.
\end{theorem}

\begin{proof}
Each $\tau \in \Aut T_\infty$ acts as a permutation of the set of the leaves of $T_n$;
let $\sgn_n(\tau)$ be the sign of this permutation.
We define a map $\sgn\colon \Aut T_\infty\to \prod_{n\ge1} \{\pm 1\}$ by assigning $\tau\mapsto \prod_{n\ge1} \sgn_n(\tau)$.

The hypotheses on $K$ imply that $K$ has only finitely many quadratic extensions.
These are in bijection with the surjective continuous homomorphisms $\Gal(K_s/K) \to \{\pm 1\}$,
so there are only finitely many such homomorphisms.
Thus the composition 
\[
\Gal(K_s/K) \stackrel{\rho_\infty}\to \Aut T_\infty \stackrel{\sgn}\to \prod_{n \ge 1} \{\pm 1\}
\]
factors through a finite product of copies of $\{\pm 1\}$, and hence has finite image.
On the other hand, the map $\Aut T_\infty \stackrel{\sgn}\to \prod_{n \ge 1} \{\pm 1\}$ is surjective.
\end{proof}

\begin{remark}
Without the assumption that $k$ is finite, Theorem~\ref{T:infinite index} can fail.
For example, if $K=\Q((t))$ and $d=2$, then any $f(x)$ as in Example~\ref{E:Stoll} defines a surjective $\rho_\infty$.
\end{remark}

\begin{remark}
If $k$ is finite but $\Char K=2$, then again Theorem~\ref{T:infinite index} can fail,
as we now explain.
In this case, $K=\F_{2^e}((t^{-1}))$ for some $e$,
and the maximal pro-$2$ quotient of $\Gal(K_s/K)$
is a free pro-$2$ group of infinite rank \cite{Katz1986}*{1.4.4}.
This implies that $\Gal(K_s/K)$ admits a continuous surjective homomorphism
onto any inverse limit of a sequence of finite $2$-groups.
If $T_\infty$ is a binary tree ($d=2$), 
then $\Aut T_\infty$ is such an inverse limit.
\end{remark}

\section{General lemmas}
\label{S:general lemmas}

For $n \ge 1$, 
let $\nu_n = -\frac{\ell^{n+1}}{(\ell^n - 1)(\ell-1)} v(\ell)$.
Let $\nu_\infty = -\frac{\ell}{\ell-1} v(\ell)$.
It will turn out that there is a shift of behavior when $v(c)$ crosses
these values.
In the tame case, all these values collapse into one: $\nu_n = 0$ for all $n \le \infty$.
In the wild case, $\nu_n = -\frac{p^{n+1}}{(p^n - 1)(p-1)}$ and their limit is $\nu_\infty = -\frac{p}{p-1}$.

\begin{lemma}
\label{L:differences between preimages}
Let $d,y \in \Kbar$.
Consider the $\ell$ solutions $x$ to $f(x)-f(y)=d$, counted with multiplicity.
\begin{enumerate}[\upshape (a)]
\item \label{I:small v(d)}
If $v(d) \le \ell v(y) - \nu_\infty$, then $v(x-y) = v(d)/\ell$ for each $x$.
\item \label{I:large v(d)}
If $v(d) > \ell v(y) - \nu_\infty$, then the solution $x$ that is closest to $y$ satisfies 
$v(x-y)=v(d) - (\ell-1) v(y) - v(\ell)$ and the other $(\ell-1)$ solutions $x$ satisfy $v(x-y) = v(y) + v(\ell)/(\ell-1)$.
The first solution lies in $K(d,y)$.
\item \label{I:v(d) just right}
If $\ell=p$ and $v(d) = \ell v(y) - \nu_\infty$, 
then the solutions generate an unramified extension of $K(d,y)$.
\end{enumerate}
\end{lemma}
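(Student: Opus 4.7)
The plan is to apply Newton polygon analysis to
\begin{equation*}
g(u) \colonequals f(y+u) - f(y) - d = u^\ell + \sum_{k=1}^{\ell-1} \binom{\ell}{k} y^{\ell-k} u^k - d,
\end{equation*}
whose roots in $\Kbar$ are precisely the values $u = x - y$ for the $\ell$ solutions $x$ to $f(x) - f(y) = d$. Its Newton polygon has endpoints $(0, v(d))$ and $(\ell, 0)$, with intermediate points $\bigl(k, v\bigl(\binom{\ell}{k}\bigr) + (\ell-k) v(y)\bigr)$ for $1 \le k \le \ell-1$; in the tame case $v\bigl(\binom{\ell}{k}\bigr) = 0$, and in the wild case $v\bigl(\binom{p}{k}\bigr) = v(p)$ for $1 \le k \le p-1$. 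A short bookkeeping using $\nu_\infty = -\ell v(\ell)/(\ell-1)$ shows that $v(d) = \ell v(y) - \nu_\infty$ is exactly the equality that places the vertex $(1, v(\ell) + (\ell-1) v(y))$ on the straight chord from $(0, v(d))$ to $(\ell, 0)$.

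Cases (a) and (b) will then be direct Newton-polygon read-offs. In (a), $v(d) \le \ell v(y) - \nu_\infty$ puts every intermediate vertex on or above the chord, so the polygon is a single segment of slope $-v(d)/\ell$ and all roots have valuation $v(d)/\ell$. In (b), $v(d) > \ell v(y) - \nu_\infty$ drops the vertex $(1, v(\ell) + (\ell-1) v(y))$ strictly below the chord, giving a short initial segment yielding a unique ``small'' root of valuation $v(d) - v(\ell) - (\ell-1) v(y)$ and a long second segment yielding $\ell-1$ roots of valuation $v(y) + v(\ell)/(\ell-1)$. Because the small root is isolated in valuation, Newton-polygon factorization of $g$ over $L \colonequals K(d,y)$ (equivalently, Hensel's lemma after a suitable rescaling) supplies the corresponding linear factor of $g$ in $L[u]$, placing that root in $L$.

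Part (c) is the delicate case: with $\ell = p$ and the vertex exactly on the chord, the Newton polygon reports only that all $p$ roots share valuation $r \colonequals v(y) + v(p)/(p-1)$, and the splitting behaviour has to be extracted from a residual calculation. The key preliminary observation is that the hypothesis forces $v(p)/(p-1) \in v(L^\times)$: since $v(d), v(y), v(p)$ all lie in $v(L^\times)$ and $v(d) - p v(y) = p v(p)/(p-1)$, a B\'ezout argument using $\gcd(p-1, p) = 1$ gives the claim. Fix $\pi \in L^\times$ with $v(\pi) = v(p)/(p-1)$, substitute $u = y \pi s$ into $g$, and divide by $y^p \pi^p$; the resulting monic polynomial $H(s) \in \calO_L[s]$ has coefficient valuations $0$ at $s^0$, $s^1$, and $s^p$ and strictly positive in between, so its reduction modulo the maximal ideal of $\calO_L$ takes the form $s^p + \bar\beta s + \bar\alpha$ with $\bar\alpha, \bar\beta \ne 0$. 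In characteristic $p$ the derivative is the nonzero constant $\bar\beta$, so the reduction is separable, and Hensel's lemma lifts each of its roots to a root of $H$ in the maximal unramified extension of $L$; hence $L(x_1, \ldots, x_p)/L$ is unramified. I expect the main obstacle to be exactly this case: one has to notice that the hypothesis itself forces $\pi \in L$ before the rescaling can be carried out inside $L$, and the separability of the reduction depends crucially on the coefficient of $s$ being nonzero after reduction---precisely the feature that distinguishes the critical value of $v(d)$ from the subcritical regime of~(a).
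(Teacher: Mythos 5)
Your proof is correct and follows the same Newton-polygon approach as the paper: the same auxiliary polynomial, the same three governing vertices, and the same residual computation for part (c). Your one genuine addition is the B\'ezout observation in (c) that $v(p)/(p-1) \in v(K(d,y)^\times)$, which justifies that the rescaling element $\pi$ can be chosen inside $K(d,y)$ before reducing; the paper's proof performs the scaling without comment, so your step makes explicit a point the paper leaves implicit.
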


\begin{proof}\hfill
\begin{enumerate}[\upshape (a)]
\item[(a,b)]
Let $z=x-y$.
Let $K'=K(d,y)$.
We need the valuations of the zeros of the polynomial 
\[
   f(z+y) - f(y) - d 
   = z^\ell + \binom{\ell}{1} y z^{\ell-1} + \binom{\ell}{2} y^2 z^{\ell-2} + \cdots + \binom{\ell}{\ell-1} y^{\ell-1} z - d \in K'[z].
\]
Its Newton polygon is the lower convex hull of the points $(0,v(d))$, $(1,(\ell-1)v(y) + v(\ell))$, and $(\ell,0)$.
The slopes of the Newton polygon depend on whether the middle point
lies above or below the line segment through $(0,v(d))$ and $(\ell,0)$.
These slopes determine the valuations of the zeros.
A Newton polygon segment of width~$1$ corresponds to a solution in the ground
field $K(d,y)$.
\item[(c)]
The Newton polygon of $f(z+y)-f(y)-d$ 
is a line segment containing the three points above, while all other
intermediate monomials correspond to points strictly above this line
since the prime $\ell$ divides each binomial coefficient.
Thus, if we scale the variable to make the first two points horizontal,
and then divide by the leading coefficient, we obtain a polynomial $g(z)$
reducing to 
$\bar{g}(z) \colonequals z^\ell + u_1 z + u_2$ for some units $u_1,u_2$.
We have $\bar{g}'(z) = u_1$, so $\bar{g}$ is separable,
so the roots of $g$ generate an unramified extension.\qedhere
\end{enumerate}  
\end{proof}

\begin{lemma}
\label{L:valuations of preimages}
Suppose that $v(c)<0$.
If $n$ is sufficiently large, then every $\alpha \in f^{-n}(a)$
satisfies $v(\alpha)=v(c)/\ell$.
If $v(a)>v(c)$, then this conclusion holds for all $n \ge 1$.
\end{lemma}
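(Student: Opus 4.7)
The plan is to analyze the one-step recursion along any chain $a = \alpha_0, \alpha_1, \ldots, \alpha_n = \alpha$ with $f(\alpha_i) = \alpha_{i-1}$, so that $\alpha_i^\ell = \alpha_{i-1} + c$. Writing $v_i = v(\alpha_i)$, this yields $\ell\, v_i = v(\alpha_{i-1} + c)$; in particular $v_i$ is determined by $v_{i-1}$ together with the additive comparison between $\alpha_{i-1}$ and $c$. Since all $\ell$ preimages of a given element share the same valuation, it suffices to track the scalar sequence $(v_i)$ for a single chain.

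First I would record the trichotomy coming from the ultrametric inequality: if $v_{i-1} > v(c)$ then $v_i = v(c)/\ell$; if $v_{i-1} < v(c)$ then $v_i = v_{i-1}/\ell$; and if $v_{i-1} = v(c)$ then $v_i \ge v(c)/\ell$, with the exact value depending on possible cancellation of residues. The key observation, used throughout, is that because $v(c) < 0$ and $\ell \ge 2$, we have $v(c)/\ell > v(c)$. Consequently, once $v_{i-1} \ge v(c)$ holds, the trichotomy gives $v_i \ge v(c)/\ell > v(c)$, whence $v_{i+1} = v(c)/\ell$, and this value is self-propagating: $v_j = v(c)/\ell$ for every $j \ge i+1$.

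It remains to show that $v_{i-1} \ge v(c)$ eventually occurs, and to control when. For the second assertion of the lemma, if $v(a) > v(c)$ then already $v_0 \ge v(c)$, so $v_1 = v(c)/\ell$ and by the self-propagation the conclusion $v_n = v(c)/\ell$ holds for every $n \ge 1$. For the first assertion, with no restriction on $v(a)$, the only remaining case is $v_0 < v(c)$; then $v_i = v_0/\ell^i$ as long as we remain strictly below $v(c)$, and since this sequence tends to $0$ from below while $v(c) < 0$, some index $i$ of size roughly $\log_\ell(v_0/v(c))$ first lands at or above $v(c)$, after which at most one more step activates the stable regime. Hence for all sufficiently large $n$, every $\alpha \in f^{-n}(a)$ satisfies $v(\alpha) = v(c)/\ell$.

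The argument is essentially bookkeeping for an ultrametric recursion, and I expect no real obstacle. The one subtle point is the boundary case $v_{i-1} = v(c)$: here the residues of $\alpha_{i-1}$ and $c$ might cancel and push $v_i$ strictly above $v(c)/\ell$. This does not derail anything, since the weaker inequality $v_i > v(c)$ is all that the stabilization step needs, and no sharper control at the one boundary step is required.
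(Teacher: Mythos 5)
Your proof is correct and follows essentially the same route as the paper: set up the one-step recursion $\alpha_{i}^\ell = \alpha_{i-1}+c$, record the same three-case trichotomy for $v(\alpha_i)$ in terms of $v(\alpha_{i-1})$ versus $v(c)$, note that $v(c)/\ell > v(c)$ because $v(c)<0$, and observe that this makes the third case self-propagating, that the first case can hold only finitely many times, and that the boundary case $v(\alpha_{i-1})=v(c)$ can occur at most once before stabilization.

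One small caution: in the last paragraph you write ``$v_0 \ge v(c)$, so $v_1 = v(c)/\ell$,'' but that implication would fail at the boundary $v_0 = v(c)$ (where only $v_1 \ge v(c)/\ell$ is guaranteed); what saves you is that the hypothesis is strict, $v(a)>v(c)$, so you are genuinely in the third case of the trichotomy and $v_1 = v(c)/\ell$ follows. Stating the strict inequality there would make the argument airtight.
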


\begin{proof}
Let $\alpha_0=a$ and let $\alpha_{n+1} \in f^{-1}(\alpha_n)$ for $n \ge 1$.
The equation $\alpha_{n+1}^\ell = \alpha_n + c$ implies that 
\[
   v(\alpha_{n+1}) = \begin{cases}
                        v(\alpha_n)/\ell, & \textup{ if $v(\alpha_n)<v(c)$;} \\
                        v(c)/\ell \textup{ or larger}, & \textup{ if $v(\alpha_n)=v(c)$;} \\
                        v(c)/\ell, & \textup{ if $v(\alpha_n)>v(c)$.}
\end{cases}
\]
Thus the first case holds at most finitely many times, and then the second case holds at most once,
and then the third case holds from then on.
\end{proof}

\begin{lemma}
\label{L:G(n) is a p-group}
If $\mu_\ell \subset K$, then $\#G(n)$ divides a power of $\ell$.
\end{lemma}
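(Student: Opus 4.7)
The plan is to show that $K_n/K$ can be built up as a tower of Kummer extensions, each of degree dividing $\ell$.

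First I would observe that because the solutions to $f^n(z)=a$ are distinct by hypothesis, $K_n$ is the splitting field of the separable polynomial $f^n(z)-a$ over $K$, so $K_n/K$ is Galois and $G(n) = \Gal(K_n/K)$. Moreover $K_n \supseteq K_{n-1}$ and $K_n/K_{n-1}$ is Galois as well.

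Next, for each $\alpha \in f^{-n}(a)$, the relation $\alpha^\ell = f(\alpha) + c$ shows that $\alpha$ is an $\ell$-th root of the element $f(\alpha)+c \in K_{n-1}$. Since $\mu_\ell \subseteq K \subseteq K_{n-1}$, the extension $K_{n-1}(\alpha)/K_{n-1}$ is a Kummer extension of exponent dividing $\ell$, hence cyclic of order dividing $\ell$. The field $K_n$ is the compositum over $K_{n-1}$ of these Kummer extensions, one for each $\alpha \in f^{-n}(a)$ (equivalently, one for each $\beta \in f^{-(n-1)}(a)$, taking any preimage), so $\Gal(K_n/K_{n-1})$ embeds into a product of copies of $\Z/\ell\Z$ and is therefore an elementary abelian $\ell$-group.

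Finally I would conclude by induction on $n$. The base case $n=0$ is trivial since $G(0)$ is the trivial group. For the inductive step, the short exact sequence
\[
1 \To \Gal(K_n/K_{n-1}) \To G(n) \To G(n-1) \To 1
\]
exhibits $G(n)$ as an extension of an $\ell$-group by an elementary abelian $\ell$-group, hence is itself an $\ell$-group. There is no real obstacle here; the only thing to be careful about is to use the hypothesis $\mu_\ell \subseteq K$ at the right place, namely to guarantee that adjoining a single $\ell$-th root already yields a Galois (Kummer) extension, rather than just a radical one.
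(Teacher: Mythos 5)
Your proof is correct and takes essentially the same approach as the paper, which simply notes that each extension $K_{n+1}/K_n$ is a Kummer extension of exponent dividing $\ell$; you have spelled out the induction and the compositum argument that the paper leaves implicit.
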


\begin{proof}
Each extension $K_{n+1}/K_n$ is a Kummer extension of exponent dividing $\ell$.
\end{proof}

\section{Sufficiently negative valuation}
\label{S:sufficiently negative valuation}

In this section, we consider the case $v(c) < \nu_\infty$.
Recall that $\nu_\infty = -\frac{\ell}{\ell-1} v(\ell)$.

\begin{lemma}
\label{L:differences between zeros at the same level}
Suppose that $v(c)<\nu_\infty$ and $v(a)>v(c)$.
If $n \ge 0$ and $\alpha,\beta \in f^{-n}(a)$, then $v(\alpha-\beta) \ge v(c)/\ell + v(\ell)/(\ell-1)$.
\end{lemma}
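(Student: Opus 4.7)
The plan is to induct on $n$. The base case $n = 0$ is immediate, since $f^{-0}(a) = \{a\}$ forces $\alpha = \beta = a$ and hence $v(\alpha - \beta) = +\infty$. For the inductive step, given $\alpha, \beta \in f^{-(n+1)}(a)$, let $\alpha' = f(\alpha)$ and $\beta' = f(\beta)$, both in $f^{-n}(a)$, and set $d = \alpha' - \beta'$. Then $\alpha$ is one of the $\ell$ solutions of $f(x) = f(\beta) + d$, so I would apply Lemma~\ref{L:differences between preimages} with $y = \beta$ and this $d$.

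The crux is comparing the inductive bound on $v(d)$ against the threshold $\ell v(\beta) - \nu_\infty$ governing which case of Lemma~\ref{L:differences between preimages} applies. Since $v(a) > v(c)$ and $n + 1 \ge 1$, Lemma~\ref{L:valuations of preimages} gives $v(\beta) = v(c)/\ell$, so the threshold simplifies to $v(c) - \nu_\infty$. The inductive hypothesis supplies $v(d) \ge v(c)/\ell + v(\ell)/(\ell - 1)$ (vacuously if $\alpha' = \beta'$, so that $d = 0$). A direct calculation, which essentially unwinds the definition $\nu_\infty = -\tfrac{\ell}{\ell-1}v(\ell)$, shows that the strict inequality $v(c) < \nu_\infty$ is in fact equivalent to
\[
\frac{v(c)}{\ell} + \frac{v(\ell)}{\ell - 1} \;>\; v(c) - \nu_\infty.
\]
So $v(d) > \ell v(\beta) - \nu_\infty$, placing us in case~(b) of Lemma~\ref{L:differences between preimages}.

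In case~(b), the $\ell - 1$ non-closest preimages $x$ of $f(\beta)+d$ satisfy $v(x - \beta) = v(\beta) + v(\ell)/(\ell - 1) = v(c)/\ell + v(\ell)/(\ell - 1)$, which is the target bound exactly. For the unique closest preimage, $v(x - \beta) = v(d) - (\ell - 1) v(\beta) - v(\ell)$; substituting the inductive bound on $v(d)$ and $v(\beta) = v(c)/\ell$ reduces the right side to $(2 - \ell)\bigl(v(c)/\ell + v(\ell)/(\ell - 1)\bigr)$, and since $v(c) < \nu_\infty$ forces $v(c)/\ell + v(\ell)/(\ell - 1) < 0$ while $\ell \ge 2$, this quantity is still at least $v(c)/\ell + v(\ell)/(\ell - 1)$. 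Either way, $\alpha$ satisfies the claimed bound, completing the induction.

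The main obstacle is really just locating the one algebraic identity that ties the target bound, the Lemma~\ref{L:differences between preimages} threshold, and the hypothesis $v(c) < \nu_\infty$ into a single coherent picture; once that inequality is unwound, the rest of the argument is bookkeeping that propagates the same constant $v(c)/\ell + v(\ell)/(\ell-1)$ through each iteration.
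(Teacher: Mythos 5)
Your proof is correct and follows essentially the same route as the paper's: induct on $n$, apply Lemma~\ref{L:differences between preimages}\eqref{I:large v(d)} with $y$ a preimage at the current level and $d$ the difference one level down, and use Lemma~\ref{L:valuations of preimages} together with the inductive bound to verify you are in case~(b). The only cosmetic difference is in how you bound the closest preimage: you plug the inductive bound $v(d)\ge v(c)/\ell+v(\ell)/(\ell-1)$ into the formula $v(d)-(\ell-1)v(\beta)-v(\ell)$ and simplify to $(2-\ell)\bigl(v(c)/\ell+v(\ell)/(\ell-1)\bigr)$, whereas one can more directly observe that the threshold condition $v(d)>\ell v(\beta)-\nu_\infty$ of case~(b) is itself exactly the inequality $v(d)-(\ell-1)v(\beta)-v(\ell)>v(\beta)+v(\ell)/(\ell-1)$, so the closest root is automatically above the target. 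Both computations are valid; the paper just takes this shortcut implicitly, reading off $v(\alpha-\beta)\ge v(y)+v(\ell)/(\ell-1)$ for all $\ell$ roots at once.
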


\begin{proof}
We may assume that $n \ge 1$ and $\alpha \ne \beta$.
We use induction on $n$.
If $n=1$, then $\beta^\ell=c+a$, so $v(\beta) = v(c+a)/\ell = v(c)/\ell$.
Also $\alpha^\ell=c+a$, 
so $\alpha = \zeta \beta$ for some $\ell$th root of unity $\zeta$.
Then $v(\alpha-\beta)=v((\zeta-1)\beta) = v(\ell)/(\ell-1) + v(c)/\ell$.

Suppose that $n>1$ and the result holds for $n-1$.
Let $d=f(\alpha)-f(\beta)$ and $y=\beta$.
If $n>1$, then by the inductive hypothesis, the hypothesis on $c$, and Lemma~\ref{L:valuations of preimages},
\begin{equation}
\label{E:chain of inequalities}
v(d) \ge v(c)/\ell + v(\ell)/(\ell-1) > v(c) + \ell v(\ell)/(\ell-1) = \ell v(y) - \nu_\infty,
\end{equation}
so Lemma~\ref{L:differences between preimages}\eqref{I:large v(d)}
shows that $v(\alpha-\beta) \ge v(y) + v(\ell)/(\ell-1) = v(c)/\ell + v(\ell)/(\ell-1)$.
\end{proof}

For $n \le \infty$, let $I(n)$ be the inertia subgroup of $G(n)$.

\begin{theorem}
\label{T:elementary abelian p-group}
If $v(c)<\nu_\infty$ and $v(a)>v(c)$ and $\mu_\ell \subseteq K$, then
\begin{enumerate}[\upshape (a)]
\item \label{I:G_n}
The group $G(n)$ is isomorphic to a subgroup of $(\Z/\ell\Z)^n$.
\item \label{I:G_n/I_n}
If the residue field of $K$ is finite, then the group $G(n)/I(n)$ is cyclic of order dividing $\ell$.
\item \label{I:generated by one element}
For any $\alpha_n \in f^{-n}(a)$, we have $K_n=K(\alpha_n)$.
\end{enumerate}
\end{theorem}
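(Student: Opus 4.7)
The plan is to prove (c) first, then (a) by induction on $n$ using (c) together with an auxiliary homomorphism $\varphi_n$, and finally to deduce (b) from (a).

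For (c), the base case $n=1$ is immediate because $\mu_\ell\subseteq K$ puts all $\ell$ roots of $z^\ell=a+c$ into $K(\alpha_1)$. For the inductive step, fix $\alpha_n\in f^{-n}(a)$, set $\alpha_i=f^{n-i}(\alpha_n)$, and show every $\beta\in f^{-n}(a)$ lies in $K(\alpha_n)$. The inductive hypothesis gives $K_{n-1}=K(\alpha_{n-1})\subseteq K(\alpha_n)$, so $\beta':=f(\beta)\in K(\alpha_n)$. If $\beta'=\alpha_{n-1}$ then $\beta\in\mu_\ell\cdot\alpha_n\subseteq K(\alpha_n)$. Otherwise, apply Lemma~\ref{L:differences between preimages}\eqref{I:large v(d)} with $y=\alpha_n$ and $d=\beta'-\alpha_{n-1}$: the estimate $v(d)\ge v(c)/\ell+v(\ell)/(\ell-1)$ from Lemma~\ref{L:differences between zeros at the same level} exceeds the cutoff $\ell v(\alpha_n)-\nu_\infty=v(c)+\ell v(\ell)/(\ell-1)$ precisely because $v(c)<\nu_\infty$. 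The preimage of $\beta'$ closest to $\alpha_n$ then lies in $K(d,\alpha_n)\subseteq K(\alpha_n)$, and the others are its $\mu_\ell$-multiples, so they and hence $\beta$ also lie in $K(\alpha_n)$.

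For (a), the inductive step requires building a homomorphism $\varphi_n\colon G(n)\to\mu_\ell$ that splits the central extension $1\to\Gal(K_n/K_{n-1})\to G(n)\to G(n-1)\to 1$ when $K_n\ne K_{n-1}$; in the trivial case $K_n=K_{n-1}$ the inductive hypothesis already gives (a). Given $\sigma\in G(n)$, set $\eta_n(\sigma)=\sigma(\alpha_n)/\alpha_n$ and compute $\eta_n(\sigma)^\ell=1+u$ with $u=(\sigma(\alpha_{n-1})-\alpha_{n-1})/(\alpha_{n-1}+c)$. The hypothesis $v(c)<\nu_\infty$ forces $v(u)>-\nu_\infty=\ell v(\ell)/(\ell-1)$, putting $u$ in the range where the $\ell$-th-power map on $1$-units is bijective. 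So there is a unique $1$-unit $1+w$ with $v(w)>v(\ell)/(\ell-1)$ and $(1+w)^\ell=1+u$, yielding a unique decomposition $\eta_n(\sigma)=\zeta(1+w)$ with $\zeta\in\mu_\ell$; set $\varphi_n(\sigma):=\zeta$. Uniqueness of the decomposition, together with $\sigma$ fixing $\mu_\ell\subseteq K$, makes $\varphi_n$ multiplicative. Since $\Gal(K_n/K_{n-1})\cong\mu_\ell$ is generated by $\sigma_\zeta\colon\alpha_n\mapsto\zeta\alpha_n$, a second application of Lemma~\ref{L:differences between preimages}\eqref{I:large v(d)} shows $\sigma_\zeta$ acts as multiplication by $\zeta$ on every element of $f^{-n}(a)$, so $\Gal(K_n/K_{n-1})$ is central in $G(n)$. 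The restriction of $\varphi_n$ to $\Gal(K_n/K_{n-1})$ is an isomorphism, which forces $\ker\varphi_n\cap\Gal(K_n/K_{n-1})=1$; a cardinality count then gives $\ker\varphi_n\isomto G(n-1)$. Centrality upgrades this splitting to a direct product $G(n)\cong\mu_\ell\times G(n-1)$, which by induction embeds in $(\Z/\ell\Z)^n$.

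Part (b) follows cleanly from (a): $G(n)/I(n)$ is elementary abelian of exponent $\ell$ by (a), while as the Galois group of a residue-field extension of a complete discretely valued field (with finite residue field in the wild case) it is cyclic. A cyclic group of exponent dividing $\ell$ has order dividing $\ell$. The main obstacle is the bookkeeping in (a), particularly the verification of the surjectivity $\ker\varphi_n\twoheadrightarrow G(n-1)$: given a lift $\tau\in G(n)$ of $\bar\tau\in G(n-1)$, one uses Lemma~\ref{L:differences between preimages}\eqref{I:large v(d)} once more to find the unique $\sigma_\zeta$ with $\sigma_\zeta\tau\in\ker\varphi_n$ (which still projects to $\bar\tau$ since $\sigma_\zeta\in\Gal(K_n/K_{n-1})$).
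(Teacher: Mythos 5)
Your argument is correct in spirit and reaches the same conclusion as the paper, but via a genuinely different route. Where the paper proves~(a) first by showing $f^{-n}(a)\isomto\prod_{i=1}^n\calD_i$ Galois-equivariantly (with $\calD_i$ a $\mu_\ell$-torsor of equivalence classes under $v(x-y)>\delta$), and then gets~(c) for free because a torsor automorphism fixing one point fixes all, you invert the order: you prove~(c) directly by induction, and then for~(a) you construct an explicit homomorphism $\varphi_n\colon G(n)\to\mu_\ell$ by factoring the unit $\eta_n(\sigma)=\sigma(\alpha_n)/\alpha_n$ as $\zeta\cdot(1+w)$ with $\zeta\in\mu_\ell$ and $v(w)>v(\ell)/(\ell-1)$. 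The cocycle relation $\eta_n(\sigma\tau)=\sigma(\eta_n(\tau))\eta_n(\sigma)$ plus $\mu_\ell\subseteq K$ and Galois-invariance of valuations gives multiplicativity of $\varphi_n$, and $\ker(\varphi_n)\cap\Gal(K_n/K_{n-1})=\{1\}$ because $\varphi_n$ restricted to $\Gal(K_n/K_{n-1})$ is the Kummer pairing. This is a concrete algebraic realization of what the paper does combinatorially: the residue of $\eta_n(\sigma)$ in $\mu_\ell$ records which equivalence class $\sigma(\alpha_n)$ falls in, so $\varphi_n$ \emph{is} the action of $\sigma$ on $\calD_n$. Both arguments rest on the same two inputs, Lemma~\ref{L:differences between preimages}\eqref{I:large v(d)} and Lemma~\ref{L:differences between zeros at the same level}, and the same threshold computation showing $v(c)<\nu_\infty$ puts you in case~\eqref{I:large v(d)}.

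Two small points to watch. First, for~(a) you actually want only the \emph{injection} $(\varphi_n,\,\mathrm{res}_{K_{n-1}})\colon G(n)\injects\mu_\ell\times G(n-1)$, which holds unconditionally; the stronger claim $G(n)\cong\mu_\ell\times G(n-1)$ and the associated cardinality count require $[K_n:K_{n-1}]=\ell$ exactly. When $\ell$ is prime (the wild case, or the tame case with $\ell$ prime) the degree is $1$ or $\ell$ and your dichotomy covers everything; but the theorem as stated applies in the tame case for any $\ell\ge2$ coprime to $p$, where $\Gal(K_n/K_{n-1})$ can be a proper nontrivial subgroup of $\mu_\ell$. The fix is painless --- drop the direct-product claim and use the injection --- but as written the inductive step silently assumes $\ell$ is prime. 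Second, the centrality assertion ``$\sigma_\zeta$ acts as multiplication by $\zeta$ on every element of $f^{-n}(a)$'' is correct but underjustified as ``a second application of Lemma~\ref{L:differences between preimages}\eqref{I:large v(d)}''; one needs to note that $\sigma_\zeta$ preserves the ultrametric, hence sends the unique preimage of $\beta_{n-1}$ within distance $>\delta$ of $\alpha_n$ to the unique one within distance $>\delta$ of $\zeta\alpha_n$, namely $\zeta\gamma$. That said, centrality is only used for the direct-product upgrade, which as noted is not needed.
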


\begin{proof}\hfill
\begin{enumerate}[\upshape (a)]
\item
Let $\delta=v(c)/\ell+v(\ell)/(\ell-1)$.
Let $m  \in \{1,\ldots,n\}$.
For $x,y \in f^{-m}(a)$, write $x \sim y$ if $v(x-y)>\delta$; this defines an equivalence relation.
Let $\calD_m = f^{-m}(a)/\sim$.
Suppose that $\alpha_{m-1},\beta_{m-1} \in f^{-(m-1)}(a)$ 
and $\alpha_m \in f^{-1}(\alpha_{m-1})$.
By Lemma~\ref{L:differences between zeros at the same level},
$v(\beta_{m-1}-\alpha_{m-1}) \ge \delta$.
Lemma~\ref{L:differences between preimages}\eqref{I:large v(d)}
with $(d,y) \colonequals (\beta_{m-1}-\alpha_{m-1},\alpha_m)$
applies (by \eqref{E:chain of inequalities}),
so for all but one $\beta_m \in f^{-1}(\beta_{m-1})$,
we have $v(\beta_m-\alpha_m) = v(y) + v(\ell)/(\ell-1) = \delta$,
and for the other $\beta_m$, we have $v(\beta_m-\alpha_m) > \delta$.
In other words,
exactly one preimage of $\beta_{m-1}$ is equivalent to $\alpha_m$.
Thus the map 
\begin{align*}
   f^{-m}(a) &\To f^{-(m-1)}(a) \times \calD_m \\
   x &\longmapsto (f(x),\textup{equivalence class of $x$})
\end{align*}
is a bijection.
The multiplication action of $\mu_\ell$ on $f^{-m}(a)$ 
is compatible with the trivial action on $f^{-(m-1)}(a)$;
on the other hand, it induces an action on $\calD_m$.
The action on $f^{-m}(a)$ is free 
(since the elements of $f^{-m}(a)$ are nonzero),
so the action on $\calD_m$ is free.
But $\# \calD_m = \ell^m / \ell^{m-1} = \ell = \# \mu_\ell$,
so $\calD_m$ is a $\mu_\ell$-torsor, 
and its automorphism group as a torsor is $\mu_\ell$ 
(for any group $H$, the automorphism group of a left $H$-torsor is isomorphic to $H$ acting on the right).
Each element of $G(n)$ acts trivially on $\mu_\ell$,
and hence acts as an automorphism of the $\mu_\ell$-torsor $\calD_m$.
Combining the bijections for $m=1,\ldots,n$ yields a Galois-equivariant bijection
$f^{-n}(a) \isomto \prod_{i=1}^n \calD_i$,
so $G(n) \le \prod_{i=1}^n \Aut_{\textup{$\mu_\ell$-torsor}}(\calD_i) = \mu_\ell^n \isom (\Z/\ell\Z)^n$.
\item
The group $G(n)/I(n)$ is isomorphic to the Galois group of the
residue field extension, which is cyclic.
Its order divides the exponent of $G(n)$, which by \eqref{I:G_n} is $\ell$.
\item
If an element of $\prod_{i=1}^m \Aut_{\textup{$\mu_\ell$-torsor}}(\calD_i)$ fixes one element of $\prod_{i=1}^n \calD_i$, it fixes all elements.
Thus the subgroup of $G(n)$ fixing $\alpha_n$ is trivial.
By Galois theory, $K(\alpha_n)=K_n$.\qedhere
\end{enumerate}
\end{proof}

Recall that $\nu_n = -\frac{\ell^{n+1}}{(\ell^n - 1)(\ell-1)} v(\ell)$,
which is $0$ in the tame case.

\begin{theorem}
\label{T:cutoff points}
Suppose that $v(a) \ge v(c)/\ell$.
In the tame case, if $v(c) < 0$, then $K_\infty = K_1$.
In the wild case, if $v(c) < \nu_n$, then $K_\infty = K_n$,
and if $v(c) = \nu_n$, then $K_\infty=K_{n+1}$ and $K_{n+1}/K_n$ is unramified.
\end{theorem}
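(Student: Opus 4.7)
The plan is to apply Lemma \ref{L:differences between preimages} iteratively, choosing at each level an approximation $y \in K_n$ to the unknown next preimage $\alpha_{n+1}$, so that the ``closest root'' produced by case (b) lands in $K(d,y) \subseteq K_n$.  In the tame case (where $\nu_\infty = 0$), I would take $y = c^{1/\ell} \in K_1 \subseteq K_n$; then $f(y) = 0$ so $d = \alpha_n$, and $v(d) = v(c)/\ell > v(c) = \ell v(y) - \nu_\infty$ by Lemma \ref{L:valuations of preimages} (using $v(a) > v(c)$), so case (b) places one preimage of $\alpha_n$ in $K_n$; since $\mu_\ell \subseteq K_1$, the remaining $\ell - 1$ preimages follow.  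Hence $K_{n+1} = K_n$ for every $n \ge 1$, giving $K_\infty = K_1$.

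For the wild case ($\ell = p$), I would instead take $y = \alpha_n \in K_n$, so that $d = f(\alpha_{n+1}) - f(\alpha_n) = \alpha_n - \alpha_{n-1}$ and the case-(b) hypothesis reads $D_n > v(c) - \nu_\infty$, where $D_m := v(\alpha_m - \alpha_{m-1})$.  To evaluate $D_m$ I would use the factorization
\[
  (\alpha_m - \alpha_{m-1}) \sum_{i=0}^{p-1} \alpha_m^i \alpha_{m-1}^{p-1-i} \;=\; \alpha_m^p - \alpha_{m-1}^p \;=\; \alpha_{m-1} - \alpha_{m-2}
\]
combined with the Taylor expansion of the sum around $\alpha_{m-1}$ as $\sum_{j=0}^{p-1} \binom{p}{j+1} \alpha_{m-1}^{p-1-j}(\alpha_m - \alpha_{m-1})^j$.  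A short check shows that its valuation is always attained either at $j = 0$ (the $p\alpha_{m-1}^{p-1}$ term, with valuation $v(p) + (p-1)v(c)/p$) or at $j = p - 1$ (the $(\alpha_m - \alpha_{m-1})^{p-1}$ term, with valuation $(p-1)D_m$), never at an intermediate index.  This yields a two-regime recursion: $D_m = D_{m-1} - v(p) - (p-1)v(c)/p$ (``linear'') or $D_m = D_{m-1}/p$ (``contracting'').  Starting from $D_1 = v(c)/p$, the contracting regime is self-consistent through step $n$ exactly when $v(c) > \nu_{n-1}$, producing $D_n = v(c)/p^n$; under this formula the case-(b) hypothesis $D_n > v(c) - \nu_\infty$ simplifies to precisely $v(c) < \nu_n$.

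Thus for $v(c) < \nu_n$, Lemma \ref{L:differences between preimages}(b) places $\alpha_{n+1} \in K_n$, and at every later step the linear recursion makes $D_m$ grow (since $-v(p) - (p-1)v(c)/p > 0$ when $v(c) < \nu_\infty$), so case (b) continues to apply and $K_{m+1} = K_m$ for all $m \ge n$; iteration gives $K_\infty = K_n$.  At the boundary $v(c) = \nu_n$ the case-(b) inequality becomes an equality at step $n+1$, so Lemma \ref{L:differences between preimages}(c) applies there and yields an unramified extension $K_{n+1}/K_n$; strict case (b) then resumes at step $n+2$ and produces $K_\infty = K_{n+1}$.  The main obstacle is the bookkeeping of the contracting/linear dichotomy, ensuring the cutoffs match $\nu_n$ exactly with no off-by-one slippage.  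Minor technical points are the case $v(a) = v(c)/p$ (where accidental cancellation can make $D_1$ strictly larger than $v(c)/p$, only strengthening the conclusion) and propagating from a single chain $(\alpha_1,\ldots,\alpha_n)$ to all of $f^{-n}(a)$, which is supplied by $\mu_p \subseteq K_1 \subseteq K_n$.
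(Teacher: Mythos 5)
Your overall skeleton --- build a chain $\alpha_0=a$, $\alpha_m\in f^{-1}(\alpha_{m-1})$ closest to $\alpha_{m-1}$, track $q_m=v(\alpha_m-\alpha_{m-1})$ via Lemma~\ref{L:differences between preimages}, and show the ``linear'' regime kicks in by step $n$ and is sticky --- is exactly the paper's strategy, and your two-regime recursion is identical to the paper's displayed recursion (you just re-derive Lemma~\ref{L:differences between preimages} by hand instead of citing it). Two concrete remarks, the second of which is a genuine gap.

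First, in the tame case you take $y=c^{1/\ell}$ and assert $c^{1/\ell}\in K_1$. But $K_1$ is generated by $\ell$th roots of $a+c$, not of $c$. The claim is true, but it needs an argument: since $v(a)\ge v(c)/\ell > v(c)$, one has $a+c=c(1+a/c)$ with $v(a/c)>0$, and $\ell\nmid p$ lets Hensel's lemma extract $(1+a/c)^{1/\ell}\in K$, whence $c^{1/\ell}\in K_1$. The paper sidesteps this entirely by using the same choice $y=\alpha_m$ in both the tame and wild cases; with $\nu_\infty=0$ the ``linear'' regime holds already at $m=1$, so $n=1$ drops out of the unified argument.

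Second --- and this is the real gap --- you pass from ``$\alpha_{n+1}\in K_n$ for the chosen chain'' to ``$K_{n+1}=K_n$'' by invoking $\mu_p\subseteq K_1\subseteq K_n$. That only produces the \emph{other preimages of the same} $\alpha_n$ (namely $\zeta\alpha_{n+1}$), not the preimages of the other $p^n-1$ elements of $f^{-n}(a)$, so it does not by itself show $f^{-(n+1)}(a)\subseteq K_n$. The paper's proof uses the nontrivial fact, Theorem~\ref{T:elementary abelian p-group}\eqref{I:generated by one element}, that $K_{m+1}=K(\mu_\ell)(\alpha_{m+1})$ for \emph{any single} $\alpha_{m+1}\in f^{-(m+1)}(a)$; then $\alpha_{m+1}\in K_m$ immediately forces $K_{m+1}\subseteq K_m$. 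Alternatively you could run the chain argument separately for each $\beta\in f^{-n}(a)$ (the recursion on $q_m^\beta$ is identical), but some such step is required and your plan omits it. Relatedly, your cutoff bookkeeping assumes $D_m=v(c)/p^m$ exactly; the paper's cleaner formulation only uses $q_1\ge v(c)/\ell$ and shows the contracting regime \emph{cannot persist $n$ steps} when $v(c)<\nu_n$, which handles the ``accidental cancellation'' case $v(a)=v(c)/\ell$ without special pleading and makes the $v(c)=\nu_n$ boundary case, via Lemma~\ref{L:differences between preimages}\eqref{I:v(d) just right}, fall out uniformly.
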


\begin{proof}
First suppose that $v(c) < \nu_n$.
Let $\alpha_0=a$, and for $m \ge 1$, let $\alpha_m$ be an element of $f^{-1}(\alpha_{m-1})$
minimizing the distance to $\alpha_{m-1}$.
Let $q_m = v(\alpha_m-\alpha_{m-1})$.
By Lemma~\ref{L:valuations of preimages}, $v(\alpha_m)=v(c)/\ell$ for all $m \ge 1$.
Thus $q_1 \ge v(c)/\ell$.
For $m \ge 1$, Lemma~\ref{L:differences between preimages} applied to $d=\alpha_m-\alpha_{m-1}$
and $y=\alpha_m$ implies
\begin{equation}
\label{E:q recursion}
   q_{m+1} = \begin{cases}
             q_m/\ell, & \textup{ if $q_m \le v(c) - \nu_\infty$;} \\
             q_m - (\ell-1)v(c)/\ell - v(\ell), & \textup{ otherwise.} 
             \end{cases}
\end{equation}
If the first case in \eqref{E:q recursion} holds for $m=1,2,\ldots,n-1$,
then $q_{n-1} = \ell^{1-n} q_1 \ge \ell^{-n} v(c) >  v(c) - \nu_\infty$
by definition of $\nu_n$, so the second case holds for $m=n$.
Moreover, if the second case holds for a given $m$, 
then we remain in the second case from then on, 
since $-(\ell-1)v(c)/\ell - v(\ell)$ is positive under the hypothesis $v(c) < \nu_n \le \nu_\infty$.
Thus the second case holds for all $m \ge n$, and we have $n=1$ in the tame case.
The final sentence of Lemma~\ref{L:differences between preimages}\eqref{I:large v(d)}
implies that for all $m \ge n$,
we have $\alpha_{m+1} \in K(d,y) \subseteq K_m$.
By Theorem~\ref{T:elementary abelian p-group}\eqref{I:generated by one element}, 
this implies that $K_{m+1}=K_m$ for all $m \ge n$.
Thus $K_\infty=K_n$.

Now suppose instead that we are in the wild case and $v(c)=\nu_n$.
Then $v(c)<\nu_{n+1}$, so the previous paragraph shows that $K_\infty=K_{n+1}$.
The arguments above show that if the first case holds 
for $m=1,2,\ldots,n-1$, then $q_{n-1} \ge v(c) - \nu_\infty$.
Thus we obtain $K_{n+1}=K_n$ as before unless if $q_{n-1}=v(c)-\nu_\infty$,
in which case 
Lemma~\ref{L:differences between preimages}\eqref{I:v(d) just right} 
shows that $\alpha_{n+1}$ is unramified over $K_n$
for each $\alpha_{n+1} \in f^{-(n+1)}(a)$.
\end{proof}

\begin{corollary}
\label{C:finite extension}
If $v(c)<\nu_\infty$, then $K_\infty$ is a finite extension of $K$.
\end{corollary}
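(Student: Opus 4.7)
The plan is to reduce to Theorem~\ref{T:cutoff points}, which already produces a finite $K_\infty$ (indeed identifies it with some $K_n$) under the additional hypothesis $v(a) \ge v(c)/\ell$. The only remaining case is $v(a) < v(c)/\ell$, which I would handle by climbing the tower to a level $N$ at which every preimage of $a$ has the ``correct'' valuation $v(c)/\ell$, then applying the theorem separately above each preimage and taking a compositum.

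Concretely, first choose $n$ with $v(c) < \nu_n$: since $\nu_n \nearrow \nu_\infty$, this is possible in the wild case, and in the tame case any $n \ge 1$ works because $\nu_n = 0 > v(c)$. In particular $v(c) < \nu_\infty \le 0$, so $v(c) < 0$, and Lemma~\ref{L:valuations of preimages} furnishes an $N$ such that every $\beta \in f^{-N}(a)$ satisfies $v(\beta) = v(c)/\ell$. The field $K_N$ is a finite extension of $K$, still complete with respect to a discrete valuation (and a finite extension of $\Q_p$ in the wild case), so one can apply Theorem~\ref{T:cutoff points} with $(K,a)$ replaced by $(K_N,\beta)$ for each such $\beta$: the hypothesis $v(\beta) \ge v(c)/\ell$ holds with equality, the quantities $v(c)$ and $\nu_n$ are unchanged, and the conclusion is that $K_N(\bigcup_m f^{-m}(\beta))$ is a finite extension of $K_N$. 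Finally, from the identity $f^{-m}(a) = \bigcup_{\beta \in f^{-N}(a)} f^{-(m-N)}(\beta)$ for $m \ge N$, the field $K_\infty$ is the compositum over the finite set $f^{-N}(a)$ of these fields---a compositum of finitely many finite extensions of $K_N$---so $[K_\infty : K]$ is finite.

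No step presents a genuine obstacle, since the analytic heavy lifting has already been done in Lemma~\ref{L:valuations of preimages} and Theorem~\ref{T:cutoff points}. The only mildly delicate point is to verify that Theorem~\ref{T:cutoff points} really does transfer to the shifted base $(K_N, \beta)$, but this is immediate: the definitions of $\nu_n$ and the Newton-polygon arguments behind the theorem depend only on $v(c)$ and $v(\ell)$, which are unaffected by the base change, and the valuation hypothesis on the starting point holds by the construction of $N$.
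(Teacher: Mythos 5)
Your proof is correct and takes essentially the same route as the paper: choose $n$ with $v(c) < \nu_n$, use Lemma~\ref{L:valuations of preimages} to pass to a level $N$ where every preimage has valuation $v(c)/\ell$, apply Theorem~\ref{T:cutoff points} over $K_N$ to each such preimage, and take the compositum. Your explicit observation that $v(c) < \nu_\infty \le 0$ (so that Lemma~\ref{L:valuations of preimages} applies) and your check that the hypotheses of Theorem~\ref{T:cutoff points} transfer to the shifted base are useful clarifications of details the paper leaves implicit.
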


\begin{proof}
Choose $n$ such that $v(c) < \nu_n$.
By Lemma~\ref{L:valuations of preimages}, there exists an $m \geq 1$ such that every $\alpha \in f^{-m}(a)$ satisfies $v(\alpha) = v(c)/\ell$.  Apply Theorem~\ref{T:cutoff points} over $K_m$ with each $\alpha$ in place of $a$,
and take the compositum of the resulting finite extensions. 
\end{proof}

\begin{theorem}
\label{T:inertia in finite case}
Suppose that $\ell=p$ and $\mu_p \subseteq K$.
\begin{enumerate}[\upshape (a)]
\item
Suppose $\nu_{n-1} < v(c) < \nu_n$ and $v(a) > v(c)/p$.
If $v(c) \notin p v(K^\times)$,
then $G(\infty) = G(n) = I(\infty) = I(n) \isom \left(\Z/p\Z\right)^n$.
More generally, if $p^r$ is the largest power of $p$ such that
$v(c) \in p^r v(K^\times)$,
then $p^{n-r} \le \#I(n) \le \#G(n) = \#G(\infty) \le p^n$.
\item
If $v(c) = \nu_n$ and $v(a) \ge v(c)/p$, 
then $G(\infty) = G(n+1) \le (\Z/p\Z)^{n+1}$,
$I(\infty)=I(n) \le (\Z/p\Z)^n$, and $G(\infty)/I(\infty) \le \Z/p\Z$.
\end{enumerate}
\end{theorem}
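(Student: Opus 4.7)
The plan is to combine Theorem~\ref{T:cutoff points} and Theorem~\ref{T:elementary abelian p-group} for the upper bounds with a direct valuation calculation for the lower bound on $\#I(n)$. For part (a), the hypotheses $v(c) < \nu_n$ and $v(a) > v(c)/\ell > v(c)$ let me invoke Theorem~\ref{T:cutoff points} to obtain $K_\infty = K_n$ (so $G(\infty) = G(n)$ and $I(\infty) = I(n)$) and Theorem~\ref{T:elementary abelian p-group}\eqref{I:G_n} to obtain $G(n) \le (\Z/\ell\Z)^n$; together these yield $\#I(n) \le \#G(n) = \#G(\infty) \le \ell^n$.

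For the lower bound $\#I(n) \ge \ell^{n-r}$, I would fix any chain of preimages $\alpha_0 = a, \alpha_1, \ldots, \alpha_n$ with $f(\alpha_m) = \alpha_{m-1}$, set $q_m := v(\alpha_m - \alpha_{m-1})$, and prove by induction that $q_m = v(c)/\ell^m$ for $1 \le m \le n$. The base case comes from $v(\alpha_1) = v(c)/\ell$ (Lemma~\ref{L:valuations of preimages}) together with $v(a) > v(c)/\ell$. The inductive step applies Lemma~\ref{L:differences between preimages}\eqref{I:small v(d)} with $y = \alpha_{m-1}$ and $d = f(\alpha_m) - f(\alpha_{m-1}) = \alpha_{m-1} - \alpha_{m-2}$; its case-(a) hypothesis $q_{m-1} \le v(c) - \nu_\infty$ rearranges to $v(c) \ge \nu_{m-1}$, which holds because $v(c) > \nu_{n-1} \ge \nu_{m-1}$ by the monotonicity of $\nu_1 < \nu_2 < \cdots$. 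Since $\alpha_n - \alpha_{n-1} \in K_n^\times$, the valuation $q_n = v(c)/\ell^n$ must lie in $v(K_n^\times) = (1/e_n)\Z$, forcing $\ell^n \mid e_n v(c)$; writing $v(c) = \ell^r s$ with $\gcd(s,\ell) = 1$ then gives $\ell^{n-r} \mid e_n = \#I(n)$. When $r = 0$, the inequalities $\ell^n \le \#I(n) \le \#G(n) \le \ell^n$ collapse to equalities, forcing $G(\infty) = G(n) = I(\infty) = I(n) \cong (\Z/\ell\Z)^n$.

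Part (b) follows directly from the earlier results: Theorem~\ref{T:cutoff points} gives $K_\infty = K_{n+1}$ with $K_{n+1}/K_n$ unramified, so $I(\infty) = I(n+1)$ is identified via restriction with $I(n)$; Theorem~\ref{T:elementary abelian p-group}\eqref{I:G_n} and \eqref{I:G_n/I_n} applied at index $n+1$ (whose hypotheses hold since $v(c) = \nu_n < \nu_\infty$ and $v(a) \ge v(c)/\ell > v(c)$) provide $G(\infty) = G(n+1) \le (\Z/\ell\Z)^{n+1}$ and $G(\infty)/I(\infty) \le \Z/\ell\Z$. The delicate point throughout is the inductive step in part (a), where one must verify that Lemma~\ref{L:differences between preimages}\eqref{I:small v(d)} applies at each level; this is precisely where the lower hypothesis $v(c) > \nu_{n-1}$ enters, and once it is in hand the remainder is mechanical bookkeeping of valuations.
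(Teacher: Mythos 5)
Your proof is essentially correct and follows the same approach as the paper: invoke Theorem~\ref{T:cutoff points} for $K_\infty = K_n$ (resp.\ $K_{n+1}$), Theorem~\ref{T:elementary abelian p-group}\eqref{I:G_n} and \eqref{I:G_n/I_n} for the upper bounds, and compute the valuation $q_n = v(c)/\ell^n$ of $\alpha_n - \alpha_{n-1}$ to bound the ramification index from below via the $\ell$-divisibility of $v(c)$; the paper's only difference is that it cites the recursion~\eqref{E:q recursion} already established inside the proof of Theorem~\ref{T:cutoff points} rather than re-deriving it from Lemma~\ref{L:differences between preimages}. One small imprecision in your writeup: writing ``$v(c) = \ell^r s$ with $\gcd(s,\ell)=1$'' and identifying $e_n$ with $\#I(n)$ tacitly normalizes $v$ so that $v(K^\times) = \Z$ rather than $v(p)=1$, which is fine but should be stated; the cleaner, normalization-free statement (as in the paper) is that $v(c)/\ell^n \in v(K_n^\times)$ and $v(c)/\ell^r \in v(K^\times) \setminus \ell\, v(K^\times)$ together force $(v(K_n^\times):v(K^\times)) \ge \ell^{n-r}$.
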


\begin{proof}\hfill
\begin{enumerate}[\upshape (a)]
\item
In the proof of Theorem~\ref{T:cutoff points},
we have $v(\alpha_1)=v(c)/p$, so $q_1 = v(\alpha_1-a)=v(c)/p$.
Then by \eqref{E:q recursion}, $q_m = v(c)/p^m$ for $m=1,\ldots,n$,
since the hypothesis $\nu_{n-1} < v(c)$ implies that 
$v(c)/p^{m-1} \le v(c) - \nu_\infty$
for $m \le n$.
In particular, $\alpha_n-\alpha_{n-1}$ is an element of $K_n$
whose valuation is $q_n = v(c)/p^n$, 
so the ramification index $(v(K_n^\times):v(K^\times))$ 
is at least $p^{n-r}$.
Thus $\#I(n) \ge p^{n-r}$.
On the other hand, $I(n) \le G(n) \le (\Z/p\Z)^n$ by Theorem~\ref{T:elementary abelian p-group}\eqref{I:G_n}.
In particular, if $r=0$, then equality holds.
In any case, $K_\infty=K_n$ by Theorem~\ref{T:cutoff points}.
\item
By Theorem~\ref{T:cutoff points}, $K_\infty=K_{n+1}$, and $K_{n+1}/K_n$ is unramified.
Then $G(\infty)=G(n+1) \le (\Z/p\Z)^{n+1}$ by
Theorem~\ref{T:elementary abelian p-group}\eqref{I:G_n},
and $I(\infty)=I(n+1)=I(n) \le G(n) \le (\Z/p\Z)^n$.
Finally, $G(\infty)/I(\infty) = G(n+1)/I(n+1) \le \Z/p\Z$ by 
Theorem~\ref{T:elementary abelian p-group}\eqref{I:G_n/I_n}.\qedhere
\end{enumerate}
\end{proof}


\section{Special negative valuation: \texorpdfstring{$v(c)=-p/(p-1)$}{v(c)=-p/(p-1)}}
\label{S:special negative valuation}

In this section and the next, we consider the wild case.

\subsection{Galois groups and inertia groups}

\begin{theorem}
\label{T:-p/(p-1)}
Suppose that $\ell=p$ and $v(c) = -p/(p-1)$ and $0 \le n < \infty$.
Let $b \in \Kbar$ be a fixed point of $f(z)$.
\begin{enumerate}[\upshape (a)]
\item \label{I:G(n)/I(n)} 
If $\mu_p \subset K$, then $G(n)/I(n)$ is a cyclic $p$-group.
\item \label{I:I_n is a p-group}
The group $I(n)$ is a $p$-group.
\item \label{I:I_n}
If $v(a)>v(c)$, then $I(n)$ is an elementary abelian $p$-group of order dividing $p^n$.
\item \label{I:G(infty)/I(infty)}
If $\mu_p \subset K$, then $G(\infty)/I(\infty) \cong \Z_p$.
\item \label{I:v(a-b)<0} If $v(a-b) < 0$, then $I(\infty)$ is an infinite pro-$p$ group;
if, moreover, $v(a)>v(c)$, then $I(\infty) \isom \left(\Z/p\Z\right)^{\infty}$.
\item \label{I:v(a-b)>=0} If $v(a-b) \ge 0$, then $I(\infty) = \{1\}$.
\end{enumerate}
\end{theorem}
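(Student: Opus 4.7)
The plan rests on two key observations. First, the hypothesis $v(c) = -p/(p-1)$ forces $1/(p-1) \in v(K^\times)$, and this makes $L \colonequals K(\mu_p)$ \emph{unramified} over $K$: picking $\omega \in K$ with $v(\omega) = 1/(p-1)$, the element $w \colonequals (\zeta_p-1)/\omega$ is a unit whose $(p-1)$-st power lies in $K^\times$, so $L = K(w)/K$ is a Kummer extension of degree dividing $p-1$ generated by a unit (and $\mu_{p-1} \subset K$ automatically, by Hensel), hence unramified. Second, the fixed point $b$ has $v(b) = -1/(p-1)$ and $v(f'(b)) = v(p) + (p-1)v(b) = 0$, so $b$ is an indifferent fixed point; the factorization $\prod_{j=0}^{p-1}(\alpha - \zeta^j b) = f(\alpha) - b$ in $\Kbar$ controls preimages via their proximity to each $\zeta^j b$.

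Parts~(a) and~(b) go quickly. For~(a), $G(n)$ is a $p$-group by Lemma~\ref{L:G(n) is a p-group}, and $G(n)/I(n)$ is cyclic as a residue-field Galois group; a cyclic quotient of a $p$-group is a cyclic $p$-group. For~(b), since $L/K$ is unramified, $K_n \cap L/K$ is unramified as well, so in the exact sequence
\[
   1 \to I(K_n/K_n \cap L) \to I(K_n/K) \to I(K_n \cap L/K) \to 1
\]
the right-hand term is trivial, and linear disjointness identifies the left-hand term with $I(L_n/L)$, which is a $p$-group by~(a) applied over $L$.

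For~(c), pass again to $L$ and prove a boundary-case analog of Theorem~\ref{T:elementary abelian p-group}. Combining Lemma~\ref{L:valuations of preimages} with the boundary case~\eqref{I:v(d) just right} of Lemma~\ref{L:differences between preimages}, induction on $n$ shows $v(\alpha-\beta) = 0$ for all distinct $\alpha,\beta \in f^{-n}(a)$, and the $\mu_p$-torsor structure on the fibers of $f$ then yields a Galois-equivariant bijection of $f^{-n}(a)$ with a product of $n$ $\mu_p$-torsors. Case~\eqref{I:v(d) just right} of Lemma~\ref{L:differences between preimages} also says the ``non-$\mu_p$'' part of each step is unramified, so the inertia at each level lies inside the $\mu_p$-action; this gives $I(L_n/L) \hookrightarrow (\Z/p\Z)^n$, and hence $I(K_n/K) \le (\Z/p\Z)^n$ by~(b). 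For~(d), $G(\infty)/I(\infty)$ is procyclic pro-$p$ (the inverse limit of the cyclic $p$-groups from~(a)); the unramified residue-field extensions that accumulate level-by-level from case~\eqref{I:v(d) just right} force this limit to be infinite, hence $\isom \Z_p$.

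Parts~(e) and~(f) rest on the dynamics near $b$. For~(f) with $v(a-b) \ge 0$, the factorization above forces $\alpha \in f^{-1}(a)$ to satisfy $v(\alpha - \zeta^i b) = v(a-b)$ for a unique $i$ (the other $p-1$ factors contributing valuation~$0$); a Hensel-style iteration then confines $f^{-n}(a)$ to a finite unramified extension of $K(\mu_p)$, and to the maximal unramified extension of $K$ itself when $b \in K$, giving $I(\infty)$ finite or trivial respectively. For~(e) with $v(a-b) < 0$, the preimages cannot cluster near any finite set of fixed points; instead every $\alpha \in f^{-n}(a)$ has valuation $v(c)/p$ for $n$ large (Lemma~\ref{L:valuations of preimages}), and a careful iteration shows each new level strictly enlarges the ramification index, so $I(\infty)$ is an infinite pro-$p$ group; when moreover $v(a) > v(c)$, combining with the bound from~(c) forces $I(\infty) \isom (\Z/p\Z)^\infty$.

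The hard part will be the boundary analog of Theorem~\ref{T:elementary abelian p-group} used in~(c) together with the strict-growth claim for the ramification index in~(e); both require careful $\mu_p$-torsor bookkeeping to separate the $\mu_p$-twist contribution (which produces inertia) from the residue-field growth (which does not).
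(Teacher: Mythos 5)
Your overall structure parallels the paper's, and parts (a), (b), (d), (f) go roughly as intended, but there are gaps. A small one first: the claim that $w^{p-1}=(\zeta_p-1)^{p-1}/\omega^{p-1}\in K^\times$ is false in general (for instance $(\zeta_3-1)^2=-3\zeta_3\notin\Q_3$). The conclusion that $K(\mu_p)/K$ is unramified when $1/(p-1)\in v(K^\times)$ is still correct, but for a different reason: the polynomial $\Phi_p(\omega x+1)/\omega^{p-1}$ has integral coefficients and reduces to the separable polynomial $x^{p-1}+\bar u$ (some $\bar u\in k^\times$), so its splitting field $K(\mu_p)$ is unramified over $K$.

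The serious gap is in (c). You propose a Galois-equivariant bijection $f^{-n}(a)\isomto\prod_i\calD_i$ with $\mu_p$-torsors $\calD_i$, as in Theorem~\ref{T:elementary abelian p-group}. That construction cannot work at the boundary. The equivalence relation there is $v(x-y)>\delta$ with $\delta=v(c)/\ell+v(\ell)/(\ell-1)$; at $v(c)=-p/(p-1)$ one has $\delta=0$ while all distinct preimages satisfy $v(\alpha-\beta)=0$ exactly, so the relation degenerates to equality and the product decomposition disappears. Worse, such a bijection would give $G(n)\le\mu_p^n$, a group of exponent $p$, contradicting part (d), which makes $G(n)/I(n)$ a cyclic $p$-group of unbounded order. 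The paper's actual mechanism for (c) is different: all of $f^{-n}(a)$ lies in a single residue disk $D_n$ with pairwise distinct residues; the simply transitive translation action of $(k_n,+)$ on $D_n/\mm$ is $G(n)$-equivariant, and since $I(n)$ fixes $k_n$ pointwise, each $\sigma\in I(n)$ commutes with all translations and hence \emph{is} a translation, yielding an injection $I(n)\hookrightarrow(k_n,+)$ (so $I(n)$ is elementary abelian) and the bound $p^n$ because the translation must permute the $p^n$ distinct residues of $f^{-n}(a)$. Likewise in (e), your lever $v(\alpha)=v(c)/p$ is constant in $n$ and produces no ramification growth; the needed quantity is $v(\alpha-b)$. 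After conjugating by $z=x+b$ to $g(x)=f(x+b)-b$ (which has integral coefficients since $v(b)=-1/(p-1)$), the Newton polygon of $g(x)-e$ forces $v(x)=v(e)/p$ when $v(e)<0$, hence $v(\alpha-b)=v(a-b)/p^n$ for $\alpha\in f^{-n}(a)$, and it is the growing denominator of this valuation that drives the ramification index to infinity.
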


\begin{proof}
Let $k_n$ be the residue field of $K_n$.
\begin{enumerate}[\upshape (a)]
\item The group $G(n)/I(n)$ is isomorphic to the group $\Gal(k_n/k)$,
a Galois group of an extension of finite fields, so it is cyclic.
By Lemma~\ref{L:G(n) is a p-group}, 
$G(n)$ is a $p$-group, so $G(n)/I(n)$ is a $p$-group too.
\item
Since $v(c)=-p/(p-1)$, the ramification index of $K$ over $\Q_p$ is divisible by $p-1$.
On the other hand, $\Q_p(\mu_p)/\Q_p$ is tamely ramified with ramification index $p-1$,
so Abhyankar's lemma implies that $K(\mu_p)/K$ is unramified.
Apply Lemma~\ref{L:G(n) is a p-group} with $K(\mu_p)$ in place of $K$.
\item
By Lemma~\ref{L:valuations of preimages}, if $m \ge 1$ and $\alpha \in f^{-m}(a)$,
then $v(\alpha)=-1/(p-1)$.

Next we prove by induction that for $n \ge 1$, 
for any distinct $\alpha_n,\beta_n \in f^{-n}(a)$,
we have $v(\alpha_n-\beta_n)=0$.
If $n=1$, then $\alpha_1 = \zeta \beta_1$ for some $p$th root of unity,
so $v(\alpha_1-\beta_1)=v(\zeta-1) + v(\beta_1) = 1/(p-1) - 1/(p-1) = 0$.
Now suppose that $n>1$ and the result holds for all $m<n$.
Given distinct $\alpha_n,\beta_n \in f^{-n}(a)$,
let $\alpha_{n-1}=f(\alpha_n)$ and $\beta_{n-1}=f(\beta_n)$.
Let $d=\alpha_{n-1}-\beta_{n-1}$ and $y=\beta_n$,
so $v(y) = -1/(p-1)$.
If $\alpha_{n-1} \ne \beta_{n-1}$,
then $v(d)=0$ by the inductive hypothesis, and $p v(y) + p/(p-1) = 0$ too,
so Lemma~\ref{L:differences between preimages}\eqref{I:small v(d)} 
implies that $v(\alpha_n-\beta_n) = v(d)/p = 0$.
If $\alpha_{n-1} = \beta_{n-1}$,
then $d=0$, so Lemma~\ref{L:differences between preimages}\eqref{I:large v(d)} applies: 
the solution to $f(x)-f(\beta_n)=0$ closest to $\beta_n$
is $\beta_n$ itself, and the other solutions satisfy $v(x-\beta_n) = v(y) + 1/(p-1) = 0$;
in particular, $v(\alpha_n-\beta_n)=0$.
In both cases, the inductive step is completed.

Let $n \ge 1$.
Let $\calO_n$ be the closed unit disk in $K_n$ centered at $0$;
let $\mm$ be the open unit disk in $K_n$ centered at $0$.
Let $D_n$ be the closed unit disk in $K_n$ containing $f^{-n}(a)$;
by the previous paragraph,
such a disk exists and the natural map $f^{-n}(a) \to D_n/\mm$ is injective.
Injectivity implies that $G(n)$ acts faithfully on $D_n/\mm$.
At this point, we use an argument parallel to that of 
the proof of Theorem~\ref{T:elementary abelian p-group}\eqref{I:G_n},
but using $I(n)$ instead of $G(n)$.
The translation action of $\calO_n/\mm$ on $D_n/\mm$ makes $D_n/\mm$ an $\calO_n/\mm$-torsor,
and this action is $G(n)$-equivariant and hence $I(n)$-equivariant.
Since $I(n)$ acts trivially on the residue field $\calO_n/\mm$,
we obtain a homomorphism $I(n) \to \Aut_{\textup{$\calO_n/\mm$-torsor}}(D_n/\mm) \isom \calO_n/\mm$.
Since $G(n)$ acts faithfully on $D_n/\mm$, this homomorphism is injective,
so $I(n)$ is an elementary abelian $p$-group.
The number of translations mapping $f^{-n}(a) \bmod \mm$ to itself
is at most $\# f^{-n}(a) = p^n$, so $\#I(n) \le p^n$.

\item
Fix $\alpha_n \in f^{-n}(a)$.
As $\beta_n$ varies over $f^{-n}(a)$,
the argument in the proof of~\eqref{I:I_n} shows that 
the differences $\alpha_n-\beta_n$ have valuation~$0$ and have distinct residues.
Thus $\#k_n \ge p^n$.
Hence $k_\infty$ is infinite, so $G(\infty)/I(\infty)$ is infinite.
On the other hand, by \eqref{I:G(n)/I(n)}, $G(\infty)/I(\infty)$ is an inverse limit of cyclic $p$-groups.
Thus $G(\infty)/I(\infty) \isom \Z_p$.

\item
By \eqref{I:I_n is a p-group} and~\eqref{I:I_n}, it will suffice to show that $I(\infty)$ is infinite.
Examining the Newton polygon of $x^p-x-c$ shows that $v(b)=v(c)/p = -1/(p-1)$.
We prove by induction that for each $n \ge 0$, 
each $\alpha_n \in f^{-n}(a)$ satisfies $v(\alpha_n-b) = v(a-b)/p^n < 0$.
The $n=0$ case is given.
Now suppose that $n \ge 1$, and the $n-1$ case for $\alpha_{n-1} = f(\alpha_n)$ is known.
Since $p v(b) - \nu_\infty = 0$, 
applying Lemma~\ref{L:differences between preimages}\eqref{I:small v(d)} with $(d,y)\colonequals (\alpha_{n-1}-b,b)$
and  $f(b)=b$
shows that the solution $\alpha_n$ to $f(x) = \alpha_{n-1}$ satisfies 
$v(\alpha_n-b) = v(\alpha_{n-1}-b)/p = v(a-b)/p^n < 0$,
which completes the inductive step.
Thus the ramification index of $K(f^{-n}(a))$ over $K$ tends to $\infty$ as $n \to \infty$.
\item
Let $\epsilon = a-b$, so $v(\epsilon) \ge 0$.
Then $v(a)=v(b+\epsilon)=v(b)=-1/(p-1)$.
Define conjugate polynomials $g(x) = f(z+b) - b$ and $h(y) = g(y+\epsilon) - \epsilon = f(z+a) - a \in K[y]$.
Then
\[
g(x) = x^p + \binom{p}{1} b x^{p-1} + \cdots + \binom{p}{p-1} b^{p-1} x.
\]
Since $v(b)=-1/(p-1)$, the polynomial $g(x)$ has $p$-adically integral coefficients,
and $g'(x)$ reduces modulo the maximal ideal to a nonzero constant.
Since $v(\epsilon) \ge 0$, the polynomial $h(y)$ has the same properties.
Thus adjoining solutions to $h(y)=e$ for any $p$-adically integral $e$ yields an unramified extension.
By induction, $K(h^{-n}(0))$ is unramified over $K$ for every $n \ge 0$.
Conjugating back shows that $K(f^{-n}(a))$ is unramified over $K$ for every $n \ge 0$.
Thus $I(\infty)=\{1\}$.
\qedhere
\end{enumerate}
\end{proof}

\begin{corollary}
\label{C:infinite extension}
If $\ell=p$ and $v(c)=-p/(p-1)$, then $[K_\infty:K] = \infty$.
\end{corollary}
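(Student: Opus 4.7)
The plan is to reduce the statement to the special case already covered by Theorem~\ref{T:-p/(p-1)}\eqref{I:G(infty)/I(infty)}, which asserts that $G(\infty)/I(\infty) \isom \Z_p$ (and hence that $K_\infty/K$ has infinite degree) under the additional hypothesis $\mu_p \subset K$. The only remaining task is therefore to eliminate this hypothesis by a finite base change.

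First I would pass to the finite extension $K' \colonequals K(\mu_p)$, of degree dividing $p-1$ over $K$. Because $v$ is the intrinsic valuation on $\Kbar$, the identity $v(c) = -p/(p-1)$ is unchanged when $f$ and $a$ are regarded as lying over $K'$, and all hypotheses of Theorem~\ref{T:-p/(p-1)} are satisfied over $K'$, with $\mu_p \subset K'$ now automatic. Setting $K'_n \colonequals K'(f^{-n}(a))$, we immediately have $K'_n = K' \cdot K_n$ and hence $K'_\infty = K' \cdot K_\infty$.

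Next I would invoke Theorem~\ref{T:-p/(p-1)}\eqref{I:G(infty)/I(infty)} with $K'$ in place of $K$, obtaining $[K'_\infty : K'] = \infty$. Since $[K':K] \le p-1 < \infty$, this forces $[K'_\infty : K] = \infty$. On the other hand, the equality $K'_\infty = K' \cdot K_\infty$ gives $[K'_\infty : K_\infty] \le [K':K] < \infty$, and combining these two facts yields $[K_\infty : K] = \infty$.

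I anticipate no serious obstacle: the whole argument is a short compatibility check that a finite base change cannot collapse an infinite extension into a finite one. The only point that deserves a moment's care is to verify that the normalization of $v$ and the identity $v(c) = -p/(p-1)$ both survive the passage from $K$ to $K'$, which they do because $v$ is fixed once and for all on $\Kbar$ and $c \in K \subset K'$.
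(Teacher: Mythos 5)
Your proof is correct and takes essentially the same approach as the paper: pass to $K(\mu_p)$ to satisfy the $\mu_p \subset K$ hypothesis, invoke Theorem~\ref{T:-p/(p-1)}\eqref{I:G(infty)/I(infty)} to see that $G(\infty)/I(\infty)$ is infinite, and note that a finite base change cannot collapse an infinite-degree extension. The paper compresses all of this into the single sentence ``We may replace $K$ by $K(\mu_p)$,'' while you spell out the compatibility checks (that $v(c)=-p/(p-1)$ survives the base change and that $[K'_\infty:K_\infty]$ is finite); the substance is identical.
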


\begin{proof}
We may replace $K$ by $K(\mu_p)$.
Then Theorem~\ref{T:-p/(p-1)}\eqref{I:G(infty)/I(infty)} implies that $G(\infty)/I(\infty)$ is infinite,
so $[K_\infty:K]=\#G(\infty) = \infty$.
\end{proof}

\begin{example}
\label{E:-1/4}
Let $p=2$ and $c=-1/4$, so $f(z)$ is $z^2+1/4$.
If $a=1/2$, then $K_\infty$ is the unramified $\Z_2$-extension of $\Q_2$.
\end{example}

\subsection{Ramification group lemmas}

We will prove results about the ramification groups of $G(\infty)$,
but first we need some lemmas about ramification groups in general.
Let $K$ be a local field, and let $L$ be a finite Galois extension of $K$
with Galois group $G$.
Let $v_L \colon L \surjects \Z \union \{\infty\}$ be the discrete valuation
normalized to have value group $\Z$.
Let $\calO_L \colonequals \{x \in L : v_L(x) \ge 0\}$.
In numbering ramification groups, we follow the conventions of \cite{SerreLocalFields1979}*{IV},
which we now recall.
For $u \in \R_{\ge 0}$, define the $u$th ramification group in the lower numbering by
\[
G_u  \colonequals \{ \sigma \in G : v_L({}^\sigma x - x) \ge u+1 \textup{ for all $x \in \calO_L$}\}.
\]
Define the Herbrand bijection $\R_{\ge 0} \to \R_{\ge 0}$ by 
\[
\phi_{L/K}(u) \colonequals \int_0^u \frac{dt}{(G_0:G_t)}.
\]
For $w \in \R_{\ge 0}$, define the $w$th ramification group $G^w$ in the upper numbering 
so that $G^{\phi_{L/K}(u)} = G_u$.
The lower and upper numbering ramification groups define descending filtrations of $G$.
The upper numbering is compatible with quotients, so for an infinite Galois extension $L$ of $K$ with Galois group,
we may define $G^w \colonequals \varprojlim \Gal(L'/K)^w$ as $L'$ ranges over the finite Galois extensions of $K$
contained in $L$.

\begin{lemma}
\label{L:intersection of ramification groups}
Let $K$ be a local field, and let $L$ be a Galois extension of $K$
with Galois group $G$.
Then $\Intersection_{w \in \R_{\ge 0}} G^w = \{1\}$.
\end{lemma}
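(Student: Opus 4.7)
The plan is to reduce the statement from the possibly infinite Galois extension $L/K$ to its finite Galois subextensions, where triviality of the intersection is elementary; the only nontrivial ingredient is the functoriality of the upper-numbering filtration under quotients, which is precisely the feature that makes the upper numbering well-defined for infinite extensions.

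First I would invoke the compatibility of the upper numbering with passage to quotients: if $N \trianglelefteq G$ is a closed normal subgroup of finite index corresponding to a finite Galois subextension $L'/K$ with $G' \colonequals G/N$, then for every $w \in \R_{\geq 0}$ the quotient map sends $G^w$ onto $(G')^w$. In the finite case this is Herbrand's theorem \cite{SerreLocalFields1979}*{IV.\S3}, and it is the reason one defines $G^w$ for arbitrary Galois $L/K$ as the inverse limit of the $(G/N)^w$ inside $G = \varprojlim_N G/N$.

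Second, for each finite quotient $G'$ I would show that $(G')^w = \{1\}$ for all sufficiently large $w$. Since $G'$ is finite, the decreasing lower-numbering filtration $(G')_u$ can strictly drop only finitely many times, so $(G')_u = \{1\}$ for $u$ larger than some $u_0 = u_0(L'/K)$. The Herbrand function $\phi_{L'/K}$ is continuous, strictly increasing, and unbounded on $\R_{\geq 0}$, and it satisfies $(G')^{\phi_{L'/K}(u)} = (G')_u$; hence $(G')^w = \{1\}$ for all $w \geq \phi_{L'/K}(u_0)$.

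Combining these two observations, any $\sigma \in \bigcap_w G^w$ projects to $1$ in every finite Galois quotient of $G$, and since $G = \varprojlim_N G/N$ this forces $\sigma = 1$. The only point requiring care is the functoriality of the upper numbering for infinite $L/K$; once that is granted, finiteness of each $G'$ makes the rest mechanical, so I do not expect a genuine obstacle here.
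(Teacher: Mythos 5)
Your proposal follows essentially the same route as the paper: pass to finite quotients via the quotient-compatibility of the upper-numbering filtration, then show that in a finite Galois extension the filtration terminates at $\{1\}$. The one place you should tighten is your justification that $(G')_u = \{1\}$ for $u$ large. You argue that since $G'$ is finite the filtration ``can strictly drop only finitely many times, so $(G')_u = \{1\}$ for $u$ larger than some $u_0$.'' Finiteness gives stabilization of the chain, but by itself does not say the stable subgroup is trivial; you need the definition of $G_u$ to rule out a nontrivial stable group. The paper supplies exactly this missing step: if $\sigma \in (G')_u$ for every $u$, then $v(\sigma x - x) \ge u+1$ for all $x$ in the ring of integers and all $u$, so $\sigma$ fixes the ring of integers and hence is the identity. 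This is a standard fact and the hole is small, but as written your sentence elides the real content of the finite case; with that one line added your argument is complete and matches the paper's.
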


\begin{proof}
The intersection maps to the corresponding intersection for each
finite Galois subextension $L'$ over $K$, so we may assume
that $L$ is finite over $K$.
Suppose that $\sigma \in \Intersection_{w \in \R_{\ge 0}} G^w$.
The $G^w$ are the same as the $G_u$, only renumbered,
so $\sigma \in G_u$ for all $u \in \R_{\ge 0}$.
Then for any $x \in \calO_L$,
we have $v_L({}^\sigma x - x) \ge u+1$ for all $u$,
so ${}^\sigma x = x$.
The field generated by the elements of $\calO_L$ is $L$,
so $\sigma = 1$ in $\Gal(L/K)$.
\end{proof}

\begin{lemma}
\label{L:comparing ramification groups}
Consider a tower of extensions $K \subseteq L \subseteq M$
of a local field $K$.
Suppose that $M$ is Galois over $K$ and $L$ is finite over $K$.
Let $G = \Gal(M/K)$ and $H = \Gal(M/L)$.
Then $G^w \intersect H \le H^w$ for all $w \in \R_{\ge 0}$.
\end{lemma}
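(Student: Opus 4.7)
The plan is to reduce to the case of a finite Galois extension $M/K$, translate the upper-numbering inclusion into a lower-numbering one using the compatibility $H_u = G_u \cap H$, and then establish the resulting inequality between Herbrand $\psi$-functions by a direct derivative comparison.

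First, because upper-numbering ramification groups for an infinite Galois extension are defined as inverse limits over finite Galois sub-extensions, and because any finite Galois sub-extension $M'/K$ containing $L$ (such $M'$ exist since $L/K$ is finite) automatically has $M'/L$ Galois as well, it suffices to prove the containment in the finite case. Assume therefore that $M/K$ is finite Galois. By the standard compatibility of the lower numbering with subgroups (both filtrations are defined from the same valuation $v_M$; see \cite{SerreLocalFields1979}*{IV, \S1, Prop.~3}), one has $H_u = G_u \cap H$ for every $u \ge 0$, so
\[
G^w \cap H = G_{\psi_{M/K}(w)} \cap H = H_{\psi_{M/K}(w)} \quad \text{and} \quad H^w = H_{\psi_{M/L}(w)}.
\]
Since the lower-numbering groups $H_u$ shrink as $u$ grows, the desired containment $G^w \cap H \le H^w$ becomes equivalent to the inequality $\psi_{M/K}(w) \ge \psi_{M/L}(w)$, or, after inverting both monotone $\psi$-functions, to $\varphi_{M/K}(u) \le \varphi_{M/L}(u)$ for all $u \ge 0$.

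To prove this inequality I would compare derivatives. Using $\varphi'_{M/K}(u) = 1/[G_0 : G_u]$ together with the analogous formula for $\varphi_{M/L}$, and the fact that both Herbrand functions vanish at $u = 0$, it suffices to show $[G_0 : G_u] \ge [H_0 : H_u]$ for all $u \ge 0$. I would establish this by a coset-counting argument: the orbit of the trivial coset in $G/H$ under the subgroup $G_u$ has size $[G_u : G_u \cap H] = [G_u : H_u]$ and is contained in the $G_0$-orbit of size $[G_0 : H_0]$, giving $[G_u : H_u] \le [G_0 : H_0]$. Rearranging yields $[G_0 : G_u] \ge [H_0 : H_u]$, and integrating produces the claimed inequality $\varphi_{M/K} \le \varphi_{M/L}$.

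The chief subtlety is that $L/K$ need not be Galois, so Herbrand's transitivity theorem $\psi_{M/K} = \psi_{M/L} \circ \psi_{L/K}$ (which, together with $\psi_{L/K}(w) \ge w$, would immediately yield the $\psi$-comparison in the Galois case) is not directly available. The derivative argument above sidesteps this issue by working intrinsically with the filtrations of $G$ and $H$, both computed from the single ambient valuation $v_M$.
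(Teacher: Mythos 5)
Your proposal is correct and follows essentially the same route as the paper: reduce to a finite Galois $M/K$, use the compatibility $H_u = G_u \cap H$, prove $(G_0:G_u) \ge (H_0:H_u)$, integrate to get $\varphi_{M/K} \le \varphi_{M/L}$, and translate to upper numbering. The only cosmetic differences are that the paper establishes the index inequality via the direct injection $H_0/H_u \hookrightarrow G_0/G_u$ rather than your coset-orbit count, and phrases the final step as a chain of implications in $\varphi$ rather than passing to $\psi$; both handle the non-Galois $L/K$ issue identically by working from the derivative formula instead of Herbrand transitivity.
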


\begin{proof}
If the result holds for every finite Galois extension of $K$
lying between $L$ and $M$,
then the result holds for $M$ too.
Thus we may assume that $M$ is finite over $K$.
Lower numbering ramification groups are compatible with subgroups;
that is, $H_t = G_t \intersect H$ for all $t \in \R_{\ge 0}$.
Thus $H_0/H_t$ injects into $G_0/G_t$, so 
\[
	\phi_{M/K}(u) 
	\colonequals \int_0^u \frac{dt}{(G_0:G_t)} 
	\le \int_0^u \frac{dt}{(H_0:H_t)} 
	\equalscolon \phi_{M/L}(u).
\]
Since the groups $G^w$ decrease as $w$ increases, for $s \in H$, this implies 
\[
	s \in G^{\phi_{M/L}(u)} 
	\implies s \in G^{\phi_{M/K}(u)} 
	\iff s \in G_u 
	\iff s \in H_u
	\iff s \in H^{\phi_{M/L}(u)}.
\]
Hence $G^{\phi_{M/L}(u)} \intersect H \le H^{\phi_{M/L}(u)}$.
As $u$ ranges over $[0,\infty)$, so does $\phi_{M/L}(u)$;
thus $G^w \intersect H \le H^w$ for all $w \in \R_{\ge 0}$.
\end{proof}

\begin{corollary}
\label{C:trivial ramification groups in towers}
With notation as in Lemma~\ref{L:comparing ramification groups},
suppose in addition that $L$ is Galois over $K$.
Let $w \in \R_{\ge 0}$.
If $H^w$ and $(G/H)^w$ are $\{1\}$, then $G^w=\{1\}$.
\end{corollary}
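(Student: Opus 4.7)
The plan is to chase a would-be nontrivial element $\sigma \in G^w$ through the tower and show it must be trivial, using one fact about each direction of the tower (quotients, then subgroups).

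First I would reduce to showing that every $\sigma \in G^w$ lies in $H$. For this I would invoke the fact that the upper numbering is compatible with quotients by closed normal subgroups: since $L/K$ is finite Galois, $H \trianglelefteq G$ is closed and $G/H = \Gal(L/K)$, and Herbrand's theorem (see \cite{SerreLocalFields1979}*{IV.\S3}, in particular the corollary that $(G/H)^w$ is the image of $G^w$ in $G/H$) gives
\[
   (G/H)^w = G^w H / H.
\]
Then the hypothesis $(G/H)^w = \{1\}$ forces the image of $\sigma$ in $G/H$ to be trivial, i.e.\ $\sigma \in H$.

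Now $\sigma \in G^w \cap H$, and Lemma~\ref{L:comparing ramification groups} (which is proved for any intermediate finite extension $L$, exactly our setup) gives
\[
   G^w \cap H \;\le\; H^w.
\]
The other hypothesis $H^w = \{1\}$ then yields $\sigma = 1$. Hence $G^w = \{1\}$, which completes the proof.

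The only subtle point is citing the quotient compatibility of the upper numbering in the right generality; this is the whole reason the upper numbering was introduced, and since $L/K$ is assumed finite Galois, the classical statement in \cite{SerreLocalFields1979}*{IV} applies directly to the (possibly infinite) Galois extension $M/K$ after passing to finite Galois subextensions between $L$ and $M$ and taking the inverse limit, exactly as in the proof of Lemma~\ref{L:comparing ramification groups}. No other input is needed.
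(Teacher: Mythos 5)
Your proof is correct and follows essentially the same two-step route as the paper: first use compatibility of upper numbering with quotients to conclude $G^w \le H$, then apply Lemma~\ref{L:comparing ramification groups} to get $G^w = G^w \cap H \le H^w = \{1\}$. The only cosmetic difference is that you invoke the full Herbrand equality $(G/H)^w = G^w H/H$, whereas the paper only needs (and only states) the containment that the quotient map carries $G^w$ into $(G/H)^w$.
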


\begin{proof}
The surjection $G \surjects G/H$ maps $G^w$ into $(G/H)^w=\{1\}$,
so $G^w \le H$.
In particular, $G^w = G^w \intersect H$,
which by Lemma~\ref{L:comparing ramification groups}
is contained in $H^w = \{1\}$.
\end{proof}

\begin{corollary}
\label{C:triviality of ramification groups}
With notation as in Lemma~\ref{L:comparing ramification groups},
if $H^w=\{1\}$ for some $w \in \R_{\ge 0}$,
then $G^{w'}=\{1\}$ for some $w' \in \R_{\ge 0}$.
\end{corollary}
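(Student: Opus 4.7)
The plan is to combine the two preceding results: use Lemma~\ref{L:comparing ramification groups} to force $G^w$ to be \emph{finite}, and then invoke Lemma~\ref{L:intersection of ramification groups} together with a descending-chain argument to pass from finite to trivial.

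First, by Lemma~\ref{L:comparing ramification groups} and the hypothesis, $G^w \cap H \le H^w = \{1\}$, so $G^w \cap H = \{1\}$. This implies that the restriction to $G^w$ of the projection $G \to G/H$ onto the coset space is injective: if $g_1,g_2 \in G^w$ lie in the same coset, then $g_1^{-1}g_2 \in G^w \cap H = \{1\}$. Hence $|G^w| \le [G:H] = [L:K] < \infty$, and so $G^w$ is a finite subgroup of $G$.

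Next, Lemma~\ref{L:intersection of ramification groups} gives $\Intersection_{w' \ge 0} G^{w'} = \{1\}$. Since the ramification filtration is decreasing, $G^{w'} \supseteq G^w$ for every $w' \in [0,w]$, so these terms contribute nothing to the intersection, and $\Intersection_{w' \ge w} G^{w'} = \{1\}$ as well. The family $\{G^{w'}\}_{w' \ge w}$ is a descending chain of subgroups of the finite group $G^w$, and a finite group has only finitely many subgroups, so this chain takes only finitely many distinct values. The smallest of these values equals the intersection, namely $\{1\}$; therefore $G^{w'} = \{1\}$ for all sufficiently large $w'$.

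No serious obstacle arises. The one subtlety worth flagging is that $H$ need not be normal in $G$ (since $L/K$ is not assumed to be Galois), so in the first step $G/H$ must be treated as a coset space rather than a quotient group; this is harmless, as all one needs is the cardinality bound $|G^w| \le [L:K]$.
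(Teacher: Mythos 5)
Your proof is correct and follows essentially the same route as the paper's: use Lemma~\ref{L:comparing ramification groups} to show $G^w$ injects into the finite coset space $G/H$, hence is finite, and then use Lemma~\ref{L:intersection of ramification groups} together with the descending chain condition in a finite group. The remark about $H$ not being normal is a sensible clarification, though the paper already phrases the injection as being into the \emph{set} $G/H$ for exactly this reason.
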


\begin{proof}
By Lemma~\ref{L:comparing ramification groups},
$G^w \intersect H \le H^w = \{1\}$.
Thus $G^w$ injects into the finite set $G/H$,
so $G^w$ is finite.
The groups $G^{w'}$ are decreasing and their intersection is $\{1\}$
by Lemma~\ref{L:intersection of ramification groups},
so $G^{w'}=\{1\}$ for some $w' \ge w$.
\end{proof}

\begin{lemma}
\label{L:ramification group in compositum}
Let $K$ be a local field.
Let $L_1,\ldots,L_n$ be Galois extensions of $K$.
Let $w \in \R_{\ge 0}$.
If $\Gal(L_i/K)^w = \{1\}$ for all $i$,
then $\Gal(L_1 \cdots L_n/K)^w = \{1\}$.
\end{lemma}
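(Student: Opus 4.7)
The plan is to reduce to the case $n=2$ by induction and then use the compatibility of upper-numbering ramification with quotients.

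For the inductive step, once the case $n=2$ is known, apply it to $L_1\cdots L_{n-1}$ (whose ramification group at level $w$ is trivial by the inductive hypothesis) and $L_n$.

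For the base case $n=2$, set $M\colonequals L_1 L_2$, let $G=\Gal(M/K)$, and let $H_i=\Gal(M/L_i)$ for $i=1,2$. Since $M$ is the compositum, we have $H_1\intersect H_2=\{1\}$. The key fact I would invoke is Herbrand's theorem (\cite{SerreLocalFields1979}*{IV.\S3}, extended to infinite Galois extensions by passage to inverse limits of finite Galois subextensions): upper-numbering ramification is compatible with quotients, so the surjection $G\surjects G/H_i=\Gal(L_i/K)$ carries $G^w$ onto $\Gal(L_i/K)^w$. By hypothesis $\Gal(L_i/K)^w=\{1\}$, hence $G^w\le H_i$ for each $i$. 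Therefore $G^w\le H_1\intersect H_2=\{1\}$.

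Since this argument is essentially formal, the only step requiring care is the invocation of the quotient-compatibility of upper numbering when the $L_i$ are permitted to be infinite over $K$; I would handle this by first proving the claim for each finite Galois subextension of $M/K$ and then passing to the inverse limit (or equivalently, by noting that both sides of the desired equality $\Gal(L_1\cdots L_n/K)^w=\{1\}$ are defined as such an inverse limit). No other step should present an obstacle.
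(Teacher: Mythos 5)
Your proof is correct and follows essentially the same route as the paper's: both rely on the quotient-compatibility of upper-numbering ramification applied to the surjections $\Gal(L_1\cdots L_n/K)\surjects\Gal(L_i/K)$, combined with the observation that these surjections jointly have trivial common kernel (i.e., $\bigcap_i \Gal(M/L_i)=\{1\}$). The paper simply phrases this as the injection $\Gal(L_1\cdots L_n/K)\injects\prod_i\Gal(L_i/K)$ carrying $\Gal(L_1\cdots L_n/K)^w$ into each $\Gal(L_i/K)^w$, without your induction to $n=2$, which is a purely presentational difference.
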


\begin{proof}
The injection $\Gal(L_1 \cdots L_n/K) \injects \prod_{i=1}^n \Gal(L_i/K)$
maps $\Gal(L_1 \cdots L_n/K)^w$ into each $\Gal(L_i/K)^w$.
\end{proof}

\begin{lemma}
\label{L:upper vs lower}
Let $L \supseteq K$ be a finite Galois extension of local fields
with Galois group $G$.
Then for any $u \in \R_{\ge 0}$, 
the $u$th upper and lower numbering ramification groups satisfy 
$G^u \le G_u$.
\end{lemma}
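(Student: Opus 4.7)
The plan is to reduce the claim to a single inequality on the Herbrand function $\phi = \phi_{L/K}$ from \cite{SerreLocalFields1979}*{IV.\S3}. Recall that the upper numbering is \emph{defined} in terms of the lower numbering by the relation $G^{\phi(u)} = G_u$, equivalently $G^w = G_{\psi(w)}$, where $\psi := \phi^{-1}$. Thus the desired inequality $G^u \le G_u$ amounts to $G_{\psi(u)} \le G_u$, which follows from the monotonicity property that $G_s \supseteq G_t$ whenever $s \le t$, provided we know that $\psi(u) \ge u$ for all $u \ge 0$.

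Taking inverses of the increasing function $\phi$, the statement $\psi(u) \ge u$ is equivalent to $\phi(u) \le u$ on $[0,\infty)$. This in turn is immediate from the integral representation
\[
   \phi(u) = \int_0^u \frac{dt}{[G_0 : G_t]} \qquad (u \ge 0),
\]
because $G_t \le G_0$ for $t \ge 0$ forces $[G_0 : G_t] \ge 1$, hence the integrand lies in $(0,1]$; integrating over $[0,u]$ then gives $\phi(u) \le u$.

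Combining these observations: for $u \ge 0$ we have $\psi(u) \ge u$, and therefore $G^u = G_{\psi(u)} \subseteq G_u$, as claimed. There is no real obstacle here; the lemma is essentially a bookkeeping statement about how one passes between the two numbering conventions, and the only nontrivial input is the elementary bound $\phi(u) \le u$ that is built into the definition of $\phi$.
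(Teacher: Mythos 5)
Your proof is correct and follows essentially the same route as the paper's: both hinge on the elementary bound $\phi(u) \le u$ coming from the integrand $1/(G_0:G_t) \le 1$, combined with monotonicity of the ramification filtration. The only cosmetic difference is that you pass through $\psi(u) \ge u$ and the lower-numbering monotonicity $G^u = G_{\psi(u)} \subseteq G_u$, whereas the paper writes $G^u \le G^{\phi(u)} = G_u$ using upper-numbering monotonicity directly; these are the same argument.
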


\begin{proof}
We have 
$\phi_{L/K}(u) \colonequals \int_0^u \frac{dt}{(G_0:G_u)} \le \int_0^u dt = u$,
so $G^u \le G^{\phi_{L/K}(u)} = G_u$.
\end{proof}

\subsection{Ramification groups of iterates}

We now return to the study of the Galois groups of $f^n(z)-a$.
The following theorem shows that when $v(c)=-p/(p-1)$,
the ramification in $K_\infty/K$ is not very deep.
Let $b \in \Kbar$ be a fixed point of $f$.
Let $e$ be the ramification index of $K$ over $\Q_p$.

\begin{theorem}
\label{T:trivial ramification group for -p/(p-1)}
Suppose that $\ell=p$.
If $v(c)=-p/(p-1)$, then there exists $w \in \R_{\ge 0}$ 
such that $G(\infty)^w=\{1\}$.
\end{theorem}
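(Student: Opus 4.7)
The plan is to reduce by a sequence of finite base changes to the case $b \in K$, $\mu_p \subseteq K$, and $v(a) > v(c)$, and then to split the argument according to whether $v(a-b) \ge 0$ (immediate via Theorem~\ref{T:-p/(p-1)}\eqref{I:v(a-b)>=0}) or $v(a-b) < 0$ (the main work). For the reduction: take a finite Galois extension $K' \supseteq K$ containing $b$ and $\mu_p$, and then enlarge $K'$ to $K'_m$ for $m$ large so that by Lemma~\ref{L:valuations of preimages} every $\alpha \in f^{-m}(a)$ has $v(\alpha) = v(c)/p > v(c)$. The field $K_\infty$ is the compositum over such $\alpha$ of the arboreal extensions of $K'_m$ starting at each $\alpha$, so Lemma~\ref{L:ramification group in compositum} reduces a uniform conductor bound on the branches to one for $\Gal(K_\infty/K'_m)^w$, and Corollary~\ref{C:triviality of ramification groups} applied to the tower $K \subseteq K'_m \subseteq K_\infty$ then transfers the bound to $\Gal(K_\infty/K)^{w'}$, whence $G(\infty)^{w'} = \{1\}$.

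Assume henceforth that $b \in K$, $\mu_p \subseteq K$, and $v(a) > v(c)$. If $v(a-b) \ge 0$, Theorem~\ref{T:-p/(p-1)}\eqref{I:v(a-b)>=0} gives $I(\infty) = \{1\}$, so $K_\infty/K$ is unramified and any $w > 0$ works. Otherwise $v(a-b) < 0$: for each finite $n$ set $L_n \colonequals K_n^{I(n)}$. Since $L_n/K$ is unramified, $(G(n)/I(n))^w = \{1\}$ for all $w > 0$, and Corollary~\ref{C:trivial ramification groups in towers} gives $G(n)^w = \{1\}$ as soon as $I(n)^w = \{1\}$ in $\Gal(K_n/L_n)$; since upper-numbering ramification commutes with inverse limits of Galois quotients, a single $w$ that works uniformly in $n$ yields $G(\infty)^w = \{1\}$. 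By Theorem~\ref{T:-p/(p-1)}\eqref{I:I_n}, $I(n)$ is elementary abelian of order at most $p^n$, and since $\mu_p \subseteq L_n$, Kummer theory decomposes $K_n/L_n$ into a compositum of at most $n$ cyclic $p$-Kummer extensions $L_n(\sqrt[p]{u})/L_n$. By Lemma~\ref{L:ramification group in compositum}, it then suffices to bound the upper conductor of each cyclic piece uniformly in $n$ and in the choice of generator $u$.

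\emph{The main obstacle} is this last uniform bound. The natural Kummer generators descend from the iteration $\alpha^p = \beta + c$, with $\alpha \in f^{-n}(a)$ and $\beta = f(\alpha) \in f^{-(n-1)}(a)$, so each generator has valuation $v(\beta + c) = p \cdot v(\alpha) = v(c) = -p/(p-1)$, independent of $n$. Because $L_n/K$ is unramified, $v_{L_n}$ restricts to $v_K$ on these elements, and so the valuation of the Kummer generators in $L_n^\times$ remains $v(c)$ as $n$ varies. The classical formula for the upper conductor of a cyclic Kummer $p$-extension of a local field containing $\mu_p$ then yields the required uniform bound $w$, with the subtle remaining point being uniform control over the Kummer representatives' levels in the unit filtration of $L_n$; this can be arranged by a careful choice of representatives exploiting the explicit form $f(z) = z^p - c$ and the fixed-point structure (in particular the coordinate change $g(x) = (x+b)^p - b^p$).
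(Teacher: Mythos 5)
Your reduction to the case $b \in K$, $\mu_p \subseteq K$, $v(a) > v(c)$ (using Lemma~\ref{L:valuations of preimages}, Lemma~\ref{L:ramification group in compositum}, and Corollary~\ref{C:triviality of ramification groups}), and your handling of the subcase $v(a-b) \ge 0$ via Theorem~\ref{T:-p/(p-1)}\eqref{I:v(a-b)>=0}, are both exactly what the paper does. The divergence---and the gap---is in the core subcase $v(a-b)<0$. You propose to decompose $K_n/L_n$ by Kummer theory into cyclic degree-$p$ pieces and then invoke the classical conductor formula, but this plan is not carried out, and as sketched it doesn't stand.

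Two concrete problems. First, the elements $\beta + c = f(\alpha) + c = \alpha^p$ for $\alpha \in f^{-n}(a)$ that you point to as ``the natural Kummer generators'' live in $K_{n-1}$, not in $L_n = K_n^{I(n)}$; there is no reason for $K_{n-1}$ to be contained in $L_n$, so these are simply not Kummer generators for the extension $K_n/L_n$ you want to control. Second, even granting a supply of Kummer generators $u_i \in L_n^\times$, the upper conductor of $L_n(\sqrt[p]{u_i})/L_n$ is not determined by $v(u_i)$ alone: with $v(c) = -p/(p-1)$ one has $(p-1)\mid e$, so $v_{L_n}(u_i) \equiv 0 \pmod p$ and $u_i$ is a unit up to $p$-th powers, and what one then needs is its depth in the unit filtration $1+\mm_{L_n}^j$---precisely the ``subtle remaining point'' you acknowledge and defer. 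That deferred step is the entire content of the bound, and nothing in your sketch shows that those unit levels stay bounded as $n \to \infty$.

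The paper sidesteps Kummer theory entirely. It picks $\gamma = \alpha - b$ for a fixed $\alpha \in f^{-n}(a)$, observes from the proof of Theorem~\ref{T:-p/(p-1)}\eqref{I:I_n} that every nontrivial $\sigma \in I(n)$ satisfies $v({}^\sigma\gamma - \gamma) = 0$ while $v(\gamma) = v(a-b)/p^n$, applies Lemma~\ref{L:differences of inverses} to the integral element $\gamma^{-1}$ to get $v_n({}^\sigma\gamma^{-1} - \gamma^{-1}) \le 2e|v(a-b)|$ uniformly in $n$ (using $\#I(n) \le p^n$), hence $G(n)_w = \{1\}$ for $w \ge 2e|v(a-b)|$, and then passes from lower to upper numbering by Lemma~\ref{L:upper vs lower}. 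If you want to salvage your Kummer route, you would need to (i) actually produce Kummer generators in $L_n^\times$, and (ii) prove the uniform unit-level bound; the paper's direct lower-numbering estimate is both shorter and avoids both issues.
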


\begin{proof}
First suppose that $v(a) > v(c)$ and $b \in K$.
If $v(a-b) \ge 0$, then 
Theorem~\ref{T:-p/(p-1)}\eqref{I:v(a-b)>=0} implies that $I(\infty) = \{1\}$,
so the conclusion holds trivially, with $w=0$.
So assume that $v(a-b)<0$.
Let $n \ge 1$.
We have $v_{K_n} = (e \#I(n)) v$.
By Theorem~\ref{T:-p/(p-1)}\eqref{I:I_n}, we have $\#I(n) \le p^n$.
Let $K'$ be the maximal unramified extension of $K$ in $K_n$.
Fix $\alpha \in f^{-n}(a)$, and let $\gamma = \alpha-b$.
Let $\sigma \in I(n) = \Gal(K_n/K')$ be such that $\sigma \ne 1$.
The proof of Theorem~\ref{T:-p/(p-1)}\eqref{I:I_n} 
shows that $\sigma$ acts on $f^{-n}(a)$ without fixed points.
In particular, ${}^\sigma \alpha \ne \alpha$,
and the proof of Theorem~\ref{T:-p/(p-1)}\eqref{I:I_n}
shows that $v({}^\sigma \alpha - \alpha)=0$.
Since $\sigma$ fixes $b$, we obtain $v({}^\sigma \gamma - \gamma)=0$.
The proof of Theorem~\ref{T:-p/(p-1)}\eqref{I:v(a-b)<0}
shows that $v(\gamma) = v(a-b)/p^n$, which is negative, 
so $v_{K_n}(\gamma) = -(e \#I(n)) |v(a-b)|/p^n \ge -e |v(a-b)|$.
Since ${}^\sigma \gamma^{-1} - \gamma^{-1} = -({}^\sigma \gamma - \gamma)/({}^\sigma \gamma \cdot \gamma)$,
we have 
\[
	v_{K_n}({}^\sigma \gamma^{-1} - \gamma^{-1})
	\le 2 e |v(a-b)|.
\]
Hence for any positive integer $w \ge 2 e |v(a-b)|$, 
we have $G(n)_w = \{1\}$, so Lemma~\ref{L:upper vs lower} 
shows that $G(n)^w = \{1\}$ too.
This holds for all $n$, so $G(\infty)^w = \{1\}$ for such $w$.

Now we consider the general case.
By Lemma~\ref{L:valuations of preimages},
we can find $m \ge 1$ such that all $\alpha \in f^{-m}(a)$ satisfy
$v(\alpha)=v(c)/p$, so $v(\alpha) > v(c)$.
Let $L$ be a finite Galois extension of $K$ containing $f^{-m}(a)$ and $b$.
For each $\alpha$, the previous paragraph yields $w \in \R_{\ge 0}$
such that $\Gal(L(f^{-\infty}(\alpha))/L)^w = \{1\}$;
by taking the maximum of the $w$'s, we find one $w$ for which 
$\Gal(L(f^{-\infty}(\alpha))/L)^w = \{1\}$ for all $\alpha \in f^{-m}(a)$.
Taking the compositum over $\alpha$ yields
$\Gal(L(f^{-\infty}(a))/L)^w = \{1\}$
by Lemma~\ref{L:ramification group in compositum}.
By Corollary~\ref{C:triviality of ramification groups},
$\Gal(L(f^{-\infty}(a))/K)^{w'} = \{1\}$ for some $w' \in \R_{\ge 0}$.
Taking the image in the quotient $G(\infty)$ of $\Gal(L(f^{-\infty}(a))/K)$
shows that $G(\infty)^{w'}=\{1\}$.
\end{proof}

\begin{example}\label{E:shallow ramification}
Suppose that $\ell=p$ and $e=p-1$ and $v(c)=-p/(p-1)$ 
and $b\in K$ and $v(a-b)=-1/(p-1)$ (this implies $v(a) \ge -1/(p-1) > v(c)$).
Then the first paragraph of the proof of 
Theorem~\ref{T:trivial ramification group for -p/(p-1)} 
shows that $G(n)_2=\{1\}$ for all $n$.
On the other hand, $G(n)_0=G(n)_1$ since the inertia group
is of $p$-power order.
Thus the only break in the ramification filtration 
(in either the lower or upper numbering) occurs at $1$,
and for the upper numbering this holds also for $I(\infty)$.
\end{example}

\begin{remark}
\label{R:Sen}
Let $K$ be a characteristic~$0$ local field with perfect residue field
of characteristic~$p$.
For a continuous homomorphism $\rho$ from $\Gal(K_s/K)$ to a $p$-adic Lie group $G$,
Sen's theorem \cite{Sen1972}*{\S4} relates the ramification filtration 
to the ``Lie filtration'' of $G$.
Theorem~\ref{T:trivial ramification group for -p/(p-1)} 
and Example~\ref{E:shallow ramification}
show that the analogue for arboreal representations does not hold.
\end{remark}

\section{Insufficiently negative valuation}
\label{S:insufficiently negative valuation}

\begin{theorem}
\label{T:slightly negative v(c)}
If $\ell=p$ and $-p/(p-1) < v(c) < 0$, then $K_\infty/K$ is infinitely wildly ramified.
\end{theorem}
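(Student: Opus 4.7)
The plan is to show that $v_p(e(K_n/K)) \to \infty$ as $n \to \infty$, which forces the wild inertia of $G(\infty)$ to be infinite. Since infinite wild ramification is preserved under prepending a finite extension to the base, Lemma~\ref{L:valuations of preimages} lets us first replace $a$ by some $a' \in f^{-n_0}(a)$ (and $K$ by $K(a')$) so that every $\alpha \in f^{-n}(a')$ with $n \ge 1$ satisfies $v(\alpha) = v(c)/p$. Set $\mu_k \colonequals (v(c) - \nu_\infty)/p^k$; since $v(c) > \nu_\infty$, this is strictly positive.

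The heart of the argument is the following claim, proved by induction on $k$: for any $y, y' \in \Kbar$ with $v(y) = v(y') = v(c)/p$ satisfying $f^k(y) = f^k(y')$ but $f^{k-1}(y) \ne f^{k-1}(y')$ (``level-$k$ siblings''), we have $v(y - y') = \mu_k$. The base case $k = 1$ follows from Lemma~\ref{L:differences between preimages}\eqref{I:large v(d)} applied with $d = 0$: the $p-1$ preimages of $f(y)$ other than $y$ all lie at distance $v(y) + v(p)/(p-1) = \mu_1$ from $y$. For $k \ge 2$, the pair $\eta \colonequals f(y)$, $\eta' \colonequals f(y')$ is a level-$(k-1)$ sibling pair, so $v(\eta' - \eta) = \mu_{k-1}$ by induction. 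Applying Lemma~\ref{L:differences between preimages}\eqref{I:small v(d)} with $d = \eta' - \eta$ gives $v(y' - y) = v(d)/p = \mu_k$, provided we check $v(d) = \mu_{k-1} \le p v(y) - \nu_\infty = \mu_0$, which is immediate since $\mu_0 > 0$ and $\mu_{k-1} = \mu_0/p^{k-1}$.

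Separability of $f$ ensures $|f^{-1}(a')| = p \ge 2$, so for each $m \ge 1$ we can pick level-$m$ siblings $y, y' \in f^{-m}(a') \subseteq K_m$ (take any $y$ and any $y'$ lying above a different element of $f^{-1}(a')$). Then $\mu_m = v(y-y') \in v(K_m^\times) = \tfrac{1}{e(K_m/K)} v(K^\times)$. Setting $\delta \colonequals v(c) - \nu_\infty \in \Q_{>0}$ and $r \colonequals v_p(\delta)$ (a fixed integer since $\delta$ is rational), we get $v_p(\mu_m) = r - m$, so $p^{m-r} \mid e(K_m/K)$ for all $m > r$, and hence $v_p(e(K_m/K)) \to \infty$.

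\textbf{Main obstacle.} The critical input is that case \eqref{I:small v(d)} of Lemma~\ref{L:differences between preimages} applies at every level of the induction; this is precisely what the hypothesis $v(c) > \nu_\infty$ secures, via the positivity $\mu_0 > 0$. In the complementary regime $v(c) < \nu_\infty$ considered in Section~\ref{S:sufficiently negative valuation}, iterated differences instead eventually fall into case \eqref{I:large v(d)} and the tower stabilizes after finitely many steps, as in Theorem~\ref{T:cutoff points} — so this single sign condition is exactly the dichotomy driving the different behaviors in the two regimes.
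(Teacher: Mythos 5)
Your proposal is correct and follows essentially the same route as the paper: both reduce via Lemma~\ref{L:valuations of preimages} to the case where all preimages have valuation $v(c)/p$, then track the valuation of the difference of two preimage chains branching at level~$1$, showing by induction (using Lemma~\ref{L:differences between preimages}\eqref{I:small v(d)}) that this valuation is $(v(c)-\nu_\infty)/p^m$, whose $p$-adic denominator grows without bound. One small wording caveat: as literally stated, your ``level-$k$ siblings'' claim needs the hypothesis $v(f^j(y))=v(f^j(y'))=v(c)/p$ for all $0 \le j \le k-1$ (not just $j=0$) for the inductive step to apply to $\eta=f(y)$, $\eta'=f(y')$; this is automatic for $y,y' \in f^{-m}(a')$ with $m \ge k$ after your reduction, so the argument goes through unchanged, but the claim as phrased is slightly weaker than what the induction uses.
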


\begin{proof}
By Lemma~\ref{L:valuations of preimages}, we may replace $a$ by some iterated preimage
to assume that $v(\alpha) = v(c)/p$ for every $\alpha \in f^{-n}(a)$ for every $n \ge 0$.
Let $\alpha_0 = a$, and inductively choose $\alpha_n \in f^{-1}(\alpha_{n-1})$ for $n \ge 1$.
Let $\beta_0 = a$,  and inductively choose $\beta_n \in f^{-1}(\beta_{n-1})$ 
such that $\beta_1 \ne \alpha_1$.
Let $d_n = \beta_n - \alpha_n$.
By Lemma~\ref{L:differences between preimages}\eqref{I:large v(d)} with $d=0$ and $y=\alpha_1$,
we have $v(d_1) = v(c)/p + 1/(p-1) > 0$.

We prove by induction that $v(d_n) = v(d_1)/p^{n-1}$ for all $n \ge 1$.
The base case $n=1$ is trivial.
Suppose that $n \ge 2$ and the result holds for $n-1$.
Let $d = d_{n-1}$ and $y=\alpha_n$.
By the inductive hypothesis, 
\[
   v(d) = v(d_1)/p^{n-2} \le v(d_1) = v(c)/p + 1/(p-1) < p(v(c)/p+1/(p-1)) = p v(y) + p/(p-1).
\]
By Lemma~\ref{L:differences between preimages}\eqref{I:small v(d)}, 
$v(d_n) = v(d)/p = v(d_{n-1})/p = v(d_1)/p^{n-1}$.

Thus the exponent of $p$ in the denominator of $v(d_n)$ eventually grows with $n$,
so $K_\infty/K$ is infinitely wildly ramified.
\end{proof}

We next bound the growth rate of $[K_n:K]$.
We have $\mu_p \subseteq K_1$.
For $r \ge 1$, the field $K_{r+1}$ is obtained from $K_r$ by adjoining the $p$th roots
of the $p^r$ numbers $\alpha_r+c$ as $\alpha_r$ ranges over the elements of $f^{-r}(a)$.
By Kummer theory, $[K_{r+1}:K_r]$ equals the order of the subgroup
generated by these $p^r$ numbers in $K_r^\times/K_r^{\times p}$.
In particular, $[K_{r+1}:K_r] \le p^{p^r}$ for all $r \ge 1$.
Similarly, $[K_1:K(\mu_p)] \le p$.
Also, $[K(\mu_p):K] \le p-1$.
Taking the product yields the ``trivial'' bound
\[
[K_n:K] \le B_n \colonequals (p-1) \prod_{m=0}^{n-1} p^{p^m}.
\]
(If $p=2$, then $B_n = \# \Aut T_n$.  For any $p$, a $p$-Sylow subgroup of $\Aut T_n$ has order $\prod_{m=0}^{n-1} p^{p^m}$.)
The next theorem shows that when $v(c) < 0$, we can do better. 

\begin{theorem}
\label{T:group size}
Suppose that $\ell=p$ and $v(c)<0$.
Let $r \in \Z_{\ge 1}$ be such that $v(c) < -p/((p^r-1)(p-1))$.
Then there exists a constant $C$ depending on $p$, $r$, and $v(a)$ such that 
\[
[K_n:K] \le C B_n^{1-p^{-r}}.
\]
\end{theorem}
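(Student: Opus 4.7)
The strategy is to exhibit many multiplicative relations in the Kummer group $K_m^\times/K_m^{\times p}$ among the generators $\{\alpha+c : \alpha\in f^{-m}(a)\}$ of $K_{m+1}/K_m$, thereby improving on the naive bound $[K_{m+1}:K_m]\le p^{p^m}$.

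I would first reduce to the case where $\mu_p\subseteq K$ and $v(\alpha)=v(c)/p$ for every $\alpha\in f^{-m}(a)$ with $m\ge 1$. Adjoining $\mu_p$ is an extension of degree at most $p-1$, and Lemma~\ref{L:valuations of preimages} lets us replace $a$ by some iterated preimage $a'\in f^{-N}(a)$ to enforce the valuation condition; both operations alter $[K_n:K]$ by at most a bounded factor, absorbed into $C$.

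The core ingredient is the identity
\[
\prod_{\alpha\in f^{-r}(\beta)}(\alpha+c)=Q_r^p+\epsilon(\beta+c),
\]
valid for $r\ge 1$ and every $\beta\in f^{-(m-r)}(a)$, where $\epsilon=(-1)^{p+1}$ (so $\epsilon=1$ for $p$ odd, $\epsilon=-1$ for $p=2$) and $Q_r\in K$ is defined recursively by $Q_1=c$, $Q_{r+1}=Q_r^p+\epsilon c$. This follows by induction on $r$ from $\prod_{\zeta^p=1}(X+\zeta y)=X^p+\epsilon y^p$ (itself a consequence of $\prod_\zeta(X-\zeta y)=X^p-y^p$). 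An easy induction also gives $v(Q_r)=p^{r-1}v(c)$, so $v(Q_r^p)=p^rv(c)$ dominates $v(\beta+c)=v(c)$, and the right-hand side factors as $Q_r^p\cdot(1+u)$ with $v(u)=v(c)-p^rv(c)=(p^r-1)\lvert v(c)\rvert$. The hypothesis $v(c)<-p/((p^r-1)(p-1))$ forces $v(u)>p/(p-1)$, so $1+u$ is a $p$-th power in $K_{m-r}$ via the $p$-adic exponential; multiplying by $Q_r^p$, the whole product lies in $K_{m-r}^{\times p}\subseteq K_m^{\times p}$, producing the Kummer relation $\sum_{\alpha\in f^{-r}(\beta)}[\alpha+c]=0$ in $K_m^\times/K_m^{\times p}$.

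These $p^{m-r}$ relations, one per $\beta\in f^{-(m-r)}(a)$, are linearly independent because their supports---the fibers $f^{-r}(\beta)$---partition $f^{-m}(a)$. Hence the Kummer subgroup generated by $\{[\alpha+c]\}_\alpha$ has $\F_p$-rank at most $p^m-p^{m-r}$, giving $[K_{m+1}:K_m]\le p^{p^m(1-p^{-r})}$ for $m\ge r$. Combining with the trivial bound for $m<r$ and multiplying over $m$ yields $\log_p[K_n:K]\le (1-p^{-r})\log_p B_n+O(1)$, and the theorem follows once $C$ absorbs the $O(1)$. The main obstacle is pinning down the identity together with its signs (the $p=2$ case is slightly delicate) and verifying that the convergence threshold $v(u)>p/(p-1)$ for the $p$-adic exponential matches the hypothesis on $v(c)$ exactly; after that, the Kummer-theoretic counting and the summation over $m$ are routine.
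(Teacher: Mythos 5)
Your proposal is correct and follows essentially the same approach as the paper: the key step is exhibiting the Kummer relation that the norm-type product $\prod_{\alpha\in f^{-r}(\beta)}(\alpha+c)$ lies in a $p$-th power class whenever $v(c)<-p/((p^r-1)(p-1))$, then counting eliminated generators per fiber. The only differences are presentational: you derive the product identity recursively from $\prod_{\zeta^p=1}(X+\zeta y)=X^p+(-1)^{p+1}y^p$, whereas the paper identifies the product directly as $(-1)^{p^r}$ times the constant term of $f^r(x-c)-\alpha_m$ (equal to your $Q_r^p+\epsilon(\beta+c)$), and you invoke the $p$-adic exponential where the paper uses a Newton polygon lemma, but both give the identical threshold $v(u)>p/(p-1)$.
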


We will need the following lemma in the proof of Theorem~\ref{T:group size}.
\begin{lemma}
\label{L:close to 1}
Let $\epsilon \in K$.
If $v(\epsilon)>p/(p-1)$, then $1+\epsilon \in K^{\times p}$.
\end{lemma}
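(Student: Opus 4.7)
The plan is to construct a $p$th root of $1+\epsilon$ in $K$ explicitly via the binomial series, and then conclude $1+\epsilon \in K^{\times p}$. Set (formally)
\[
   \alpha \colonequals \sum_{n \ge 0} \binom{1/p}{n} \epsilon^n.
\]
The whole proof reduces to showing this series converges in $K$ and that its $p$th power equals $1+\epsilon$.

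First I would compute $v\binom{1/p}{n}$. The normalization used throughout the section makes $v(p)=1$, so $v(1/p)=-1$. For each $k \in \{0,1,\ldots,n-1\}$ the factor $1/p-k$ has valuation exactly $-1$, because $v(1/p)=-1 \le v(k)$ with equality only when $k=0$; hence the numerator of $\binom{1/p}{n}$ has valuation $-n$. Combining this with Legendre's formula $v(n!)=(n-s_p(n))/(p-1)$, where $s_p(n)$ is the sum of base-$p$ digits of $n$, yields
\[
   v\binom{1/p}{n} \;=\; -n - \frac{n-s_p(n)}{p-1} \;=\; -\frac{np}{p-1} + \frac{s_p(n)}{p-1}.
\]

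Next I would check convergence. The $n$th term satisfies
\[
   v\!\left(\binom{1/p}{n}\epsilon^n\right)
   = n\!\left(v(\epsilon) - \frac{p}{p-1}\right) + \frac{s_p(n)}{p-1}.
\]
Under the hypothesis $v(\epsilon) > p/(p-1)$ the linear term tends to $+\infty$ and the digit term is non-negative, so the valuations tend to $\infty$. By completeness of $K$, the partial sums converge to some $\alpha \in K$. Finally, the classical formal identity $\bigl(\sum_n \binom{1/p}{n} X^n\bigr)^p = 1+X$ in $\Q[[X]]$ specializes, by continuity of multiplication in the $v$-adic topology, to $\alpha^p = 1+\epsilon$, so $1+\epsilon \in K^{\times p}$.

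The only real work is the valuation bookkeeping above; everything else is completeness and continuity. A naive Hensel's lemma applied to $g(X)=X^p-(1+\epsilon)$ at $X=1$ would demand $v(g(1))>2v(g'(1))=2$, which for $p>2$ is strictly stronger than the given bound $v(\epsilon)>p/(p-1)$, so the crude form of Hensel does not suffice and the binomial-series argument (equivalently, a sharper quantitative Hensel) is the natural route.
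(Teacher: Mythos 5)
Your proof is correct, but it takes a genuinely different route from the paper's. The paper simply applies the Newton polygon to $(1+x)^p - (1+\epsilon)$: under $v(\epsilon) > p/(p-1)$, the point $(1,1)$ lies strictly below the segment joining $(0,v(\epsilon))$ to $(p,0)$, so the polygon has vertices $(0,v(\epsilon))$, $(1,1)$, $(p,0)$, and the width-$1$ segment on the left yields a root in $K$ of the form $x$ with $(1+x)^p = 1+\epsilon$. This is a two-sentence argument and reuses the same Newton-polygon technology the paper already deploys in Lemma~\ref{L:differences between preimages}. Your approach instead constructs the $p$th root explicitly via the binomial series $\sum_n \binom{1/p}{n}\epsilon^n$, with the key estimate $v\binom{1/p}{n} = -np/(p-1) + s_p(n)/(p-1)$ from Legendre's formula. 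The calculation is correct (for $k\ge 1$ we have $v(1/p - k) = -1$ by the ultrametric inequality, giving $-n$ for the numerator), and the threshold $v(\epsilon) > p/(p-1)$ is exactly what makes the term valuations tend to $+\infty$. Your observation that the crude form of Hensel (requiring $v(\epsilon) > 2v(p) = 2$) is insufficient for $p>2$ is apt, and in fact both the Newton-polygon argument and the binomial series are ways of implementing the sharp quantitative Hensel bound. What the paper's route buys is brevity and uniformity with the rest of Section~\ref{S:general lemmas}; what yours buys is an explicit formula for the root. One small addition you should make: note that $\alpha$ is a unit (the $n=0$ term is $1$ and all others have positive valuation), or simply that $\alpha^p = 1+\epsilon \neq 0$, so $\alpha \in K^\times$ and hence $1+\epsilon \in K^{\times p}$. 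The step where the formal identity $\bigl(\sum_n\binom{1/p}{n}X^n\bigr)^p = 1+X$ specializes to $\alpha^p = 1+\epsilon$ is standard, but deserves the remark that the term-valuation estimate is \emph{uniform} enough that the tails of the truncated $p$th powers tend to zero, which is what makes continuity applicable.
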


\begin{proof}
The hypothesis implies that the Newton polygon of $(1+x)^p - (1+\epsilon)$
has vertices at $(0,v(\epsilon))$, $(1,1)$, and $(p,0)$.
The width~$1$ segment at the left corresponds to a root in $K$.
\end{proof}

\begin{proof}[Proof of Theorem~\ref{T:group size}]
By Lemma~\ref{L:valuations of preimages},
there exists $m_0 \ge 1$ such that if $m \ge m_0$ and $\alpha_m \in f^{-m}(a)$,
then $v(\alpha_m) \ge v(c)$.

We will show that if $m \ge m_0$ and $\alpha_m \in f^{-m}(a)$, then
\begin{equation}
\label{E:product is pth power}
\prod_{\alpha_{m+r} \in f^{-r}(\alpha_m)} (\alpha_{m+r}+c) \in K_{m+r}^{\times p}.
\end{equation}
The numbers $\alpha_{m+r}+c$ in the product are the zeros of the polynomial $f^r(x-c)-\alpha_m$.
Their product is $(-1)^{p^r}$ times the constant term, so the product is
\begin{equation}
\label{E:product of numbers}
(-1)^{p^r} (f^r(-c) - \alpha_m) = (-1)^{p^r}(t^p - c - \alpha_m) = ((-1)^{p^{r-1}} t)^p \left( 1 - \frac{c+\alpha_m}{t^p} \right),
\end{equation}
where $t \colonequals f^{r-1}(-c)$.
We have $v(t) = p^{r-1} v(c)$, and $v(c+\alpha_m) \ge v(c)$, so $v((c+\alpha_m)/t^p) \ge v(c) - p^r v(c) > p/(p-1)$.
Thus, by Lemma~\ref{L:close to 1} over $K_{m+r}$, the second factor on the right of \eqref{E:product of numbers} 
is a $p$th power in $K_{m+r}$ (as is the first).
This proves~\eqref{E:product is pth power}.

Applying~\eqref{E:product is pth power} to the $p^m$ numbers $\alpha_m \in f^{-m}(a)$
shows that $K_{m+r+1}$ is obtained from $K_{m+r}$ by adjoining at most $p^m(p^r-1)$ roots,
so 
\[
[K_{m+r+1}:K_{m+r}] \le p^{p^m(p^r-1)} = p^{p^{m+r}(1-p^{-r}) }.
\]
Thus if $n \ge m_0+r$,
\[
[K_n:K] \le [K_{m_0+r}:K] \prod_{s=m_0+r}^{n-1} p^{p^s(1-p^{-r}) } \le C B_n^{1-p^{-r}}
\]
for some $C$.
\end{proof}


\section{Nonnegative valuation}
\label{S:nonnegative valuation}

In this section, we treat the tame and wild cases in which $v(c) \ge 0$.
Fix an arbitrary sequence of preimages $(\alpha_n)_{n \ge 0}$
defined by 
$\alpha_0 \colonequals a$ 
and $\alpha_{n+1} \in f^{-1}(\alpha_n)$ for $n \ge 0$.
Let $(\beta_n)_{n \ge 0}$ be another such sequence;
if $a+c \ne 0$, we may assume that $\beta_1 \ne \alpha_1$.
For $n \ge 0$, let $d_n = \alpha_n-\beta_n$.

\begin{lemma}
\label{L:stupid lemma}
If $v(c) \ge 0$ and $\min\{v(a),v(c)\} \ne 0$ and $v(a) \ne v(c)$, 
then $K_\infty/K$ is infinitely ramified, and infinitely wildly ramified if $\ell=p$.
\end{lemma}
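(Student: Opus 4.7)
The plan is to track $v(\alpha_n)$ along an arbitrary chain of iterated preimages and to show that, after normalizing so that $v(K^\times)=\Z$, the denominator of $v(\alpha_n)$ in lowest terms grows without bound in $n$; this forces the ramification index of $K_n/K$ to tend to infinity, and hence $K_\infty/K$ to be infinitely ramified.

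First I would observe that the hypotheses $v(c)\ge 0$, $\min\{v(a),v(c)\}\ne 0$, and $v(a)\ne v(c)$ permit exactly three scenarios: (i) $v(a)>v(c)>0$; (ii) $0<v(a)<v(c)$; (iii) $v(a)<0\le v(c)$. In each case I would then invoke the valuation recursion from the proof of Lemma~\ref{L:valuations of preimages}: whenever $v(\alpha_n)<v(c)$, one has $v(\alpha_{n+1})=v(\alpha_n)/\ell$.

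In case (i), $v(\alpha_1^\ell)=v(a+c)=v(c)$, so $v(\alpha_1)=v(c)/\ell$, which lies strictly between $0$ and $v(c)$; by induction $v(\alpha_n)=v(c)/\ell^n$ for every $n\ge 1$. In cases (ii) and (iii), $v(\alpha_1^\ell)=v(a+c)=v(a)$, so $v(\alpha_1)=v(a)/\ell$, which is nonzero and strictly less than $v(c)$ (in case (iii), $v(\alpha_1)<0\le v(c)$); the recursion then yields $v(\alpha_n)=v(a)/\ell^n$ for every $n\ge 1$. Letting $q$ denote the resulting nonzero rational value ($v(c)$ or $v(a)$), after normalizing $v(K^\times)=\Z$ the expression $v(\alpha_n)=q/\ell^n$ has denominator in lowest terms tending to infinity with $n$. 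Since $\alpha_n\in K_n$, the ramification index of $K_n/K$ is at least this denominator, and therefore tends to infinity.

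The only real obstacle is bookkeeping in the case split, in particular checking that the ``$v(\alpha_n)=v(c)$'' branch of the recursion is never activated from the given starting data. This is immediate in each case: $v(\alpha_1)<v(c)$ by the computations above, and the recursion then preserves the strict inequality $v(\alpha_n)<v(c)$ for all $n\ge 1$ (in case (iii) the values $v(\alpha_n)$ increase toward $0$ but never reach the nonnegative number $v(c)$).
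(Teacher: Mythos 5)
Your proposal is correct and follows essentially the same approach as the paper: prove by induction that $v(\alpha_n)=\min\{v(a),v(c)\}/\ell^n<v(c)$, so the recursion $v(\alpha_{n+1})=v(\alpha_n)/\ell$ applies at every step, and conclude from the growing denominator that the ramification index of $K_n/K$ tends to infinity. The paper states this uniformly in terms of $\min\{v(a),v(c)\}$ while you split into three cases, but the content is identical.
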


\begin{proof}
We prove $v(\alpha_n) = \min\{v(a),v(c)\}/\ell^n < v(c)$ for $n \ge 1$
by induction.
The equation $\alpha_1^\ell - c = a$ implies that 
$v(\alpha_1) = \min\{v(a),v(c)\}/\ell < v(c)$.
If the statement is true for a given $n \ge 1$,
then the equation $\alpha_{n+1}^\ell - c = \alpha_n$
implies $v(\alpha_{n+1}) = v(\alpha_n)/\ell$,
so $v(\alpha_{n+1}) = \min\{v(a),v(c)\}/\ell^{n+1} < v(c)$.
Thus the denominator of $v(\alpha_n)$ tends to infinity,
so $K_\infty/K$ is infinitely ramified.
If $\ell=p$, the proof shows also that the exponent of $p$ in the denominator of $v(\alpha_n)$ tends to infinity.
\end{proof}

\subsection{Wild case}

We now assume that $\ell=p$ (and $v(c) \ge 0$).
The following will be used to prove the main result of this section, Theorem~\ref{T:nonnegative v(c)}.

\begin{lemma}
\label{L:stupid lemma 2}
If $\ell=p$ and $v(c) > 0$ and $v(a)=0$,
then $K_\infty/K$ is infinitely wildly ramified.
\end{lemma}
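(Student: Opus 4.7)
The plan is to pick two distinct sequences of preimages and show that the valuation of $d_n=\alpha_n-\beta_n$ carries an unbounded power of $p$ in its denominator; since $d_n\in K_n$, this forces the ramification index $e(K_n/K)$ to acquire arbitrarily large $p$-powers, giving infinite wild ramification.

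First I would establish the orbit valuations $v(\alpha_n)=v(\beta_n)=0$ for all $n\ge 0$: this follows by induction from $\alpha_{n+1}^p=\alpha_n+c$, since $v(a)=0<v(c)$ gives $v(\alpha_n+c)=0$. In particular $a+c\ne 0$, so the section's convention permits us to take $\beta_1\ne\alpha_1$; then $\beta_1=\zeta\alpha_1$ for some nontrivial $\zeta\in\mu_p$, yielding
\[
v(d_1)=v(\alpha_1)+v(\zeta-1)=\frac{v(p)}{p-1}.
\]

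The iteration step applies Lemma~\ref{L:differences between preimages} to the solutions $x$ of $f(x)-f(\beta_n)=d_{n-1}$, with $y=\beta_n$, so that $v(y)=0$. The cutoff in the lemma is $p\,v(y)-\nu_\infty=p\,v(p)/(p-1)$, and $v(d_1)=v(p)/(p-1)\le p\,v(p)/(p-1)$, so part~(a) applies and gives $v(d_n)=v(d_{n-1})/p$. The cutoff stays constant in $n$ while $v(d_n)$ strictly decreases, so the same reasoning carries through at every step; by induction,
\[
v(d_n)=\frac{v(p)}{p^{n-1}(p-1)} \qquad (n\ge 1).
\]

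To conclude, normalize $v$ so that $v(K^\times)=\Z$ and set $e_0\colonequals v(p)$. Since $d_n\in K_n$, the identity $v(d_n)=e_0/(p^{n-1}(p-1))\in v(K_n^\times)=\tfrac{1}{e(K_n/K)}\Z$ forces $p^{n-1}(p-1)\mid e_0\,e(K_n/K)$; as $v_p(e_0)$ is fixed, $v_p(e(K_n/K))\to\infty$, so $K_\infty/K$ is infinitely wildly ramified. The only point requiring care is the applicability of Lemma~\ref{L:differences between preimages}(a) at every step, but this is automatic since the cutoff is independent of $n$ while $v(d_n)$ decreases geometrically.
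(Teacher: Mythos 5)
Your proposal is correct and follows essentially the same route as the paper: establish $v(\alpha_n)=0$, get $v(d_1)=v(\ell)/(\ell-1)$, then iterate Lemma~\ref{L:differences between preimages}\eqref{I:small v(d)} to show $v(d_n)=v(d_1)/p^{n-1}$, and conclude from the growing $p$-power in the denominator. The only differences are cosmetic — you compute $v(d_1)$ directly from the factorization $d_1=(1-\zeta)\alpha_1$ rather than invoking Lemma~\ref{L:differences between preimages}\eqref{I:large v(d)} with $d=0$, and you spell out the final normalization argument that the paper leaves implicit — but both are valid and give the identical bound.
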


\begin{proof}
By induction, $v(\beta_n)=0$ for all $n \ge 0$.
Now $v(d_1) = 1/(p-1)$ 
by Lemma~\ref{L:differences between preimages}\eqref{I:large v(d)}
with $d=d_0$ and $y=\beta_1$.
Then $v(d_n)=v(d_1)/p^{n-1}$ by induction on $n$, 
by Lemma~\ref{L:differences between preimages}\eqref{I:small v(d)}
with $d=d_{n-1}$ and $y=\beta_n$.
Thus the denominator of $v(d_n)$ tends to infinity,
so $K_\infty/K$ is infinitely ramified.
\end{proof}

\begin{theorem}
\label{T:nonnegative v(c)}
If $\ell=p$ and $v(c) \ge 0$, then $K_\infty/K$ is infinitely wildly ramified.
\end{theorem}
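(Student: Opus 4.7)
The plan is to case-split on $v(a)$, reducing most subcases to Lemma~\ref{L:stupid lemma} or Lemma~\ref{L:stupid lemma 2}, and to handle the residual ``unit'' case $v(a)=v(c)=0$ by a direct analysis of differences between branches of the preimage tree.

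I would first dispatch the easy subcases. If $v(a)<0$, or if $v(a)>0$ with $v(a)\ne v(c)$, then Lemma~\ref{L:stupid lemma} gives $v(\alpha_n)=\min\{v(a),v(c)\}/p^n$, whose $p$-adic denominator grows without bound; if $v(c)>0$ and $v(a)=0$, Lemma~\ref{L:stupid lemma 2} applies directly. The remaining situations---$v(c)=0$ with $v(a)>0$, and $v(a)=v(c)>0$---reduce by replacing $a$ with a suitable preimage in $f^{-1}(a)$ (and $K$ by the finite extension containing it, preserving infinite wild ramification). In the first, $v(\alpha_1)=v(a+c)/p=0$, bringing us to the core case $v(a)=v(c)=0$. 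In the second, generically $v(\alpha_1)<v(c)$ and Lemma~\ref{L:stupid lemma} applies to $\alpha_1$; the exceptional degenerate subcase $v(a+c)=pv(c)$ (forcing $v(\alpha_1)=v(c)$) escapes after a few iterations because the $p$ preimages at each level cannot all satisfy the precise cancellation required to keep us in the degenerate regime.

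For the core case $v(a)=v(c)=0$, I would (after replacing $a$ with an element of $f^{-2}(a)$ if $a+c=0$) pick distinct $\alpha_1,\beta_1\in f^{-1}(a)$, so $\beta_1=\zeta\alpha_1$ for some $\zeta\in\mu_p\setminus\{1\}$ in $\overline{K}$, and extend arbitrarily to sequences $(\alpha_n),(\beta_n)$ of preimages. Setting $d_n=\alpha_n-\beta_n$, we have $v(d_1)=v(\zeta-1)+v(\alpha_1)=v(p)/(p-1)+v(\alpha_1)\ge v(p)/(p-1)$. Applying Lemma~\ref{L:differences between preimages} inductively with $d=d_n$ and $y=\beta_{n+1}$, and noting that $v(\beta_{n+1})\ge 0$ by integrality so that $pv(\beta_{n+1})-\nu_\infty\ge pv(p)/(p-1)$, part~\eqref{I:small v(d)} applies whenever $v(d_n)\le pv(p)/(p-1)$. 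In the generic sub-subcase $v(\alpha_1)=0$ this bound holds at $n=1$ and is preserved since $v(d_n)$ decreases, giving $v(d_n)=v(p)/((p-1)p^{n-1})$; the $p^{n-1}$ in the denominator forces the wild ramification index of $K_n/K$ to be at least $p^{n-1}$, unbounded as $n\to\infty$. The non-generic sub-subcase $v(\alpha_1)>0$ is handled either by the same recursion (when $v(d_1)$ remains below $pv(p)/(p-1)$) or by first invoking part~\eqref{I:large v(d)} of Lemma~\ref{L:differences between preimages} at one of the $p-1$ other preimages of $\beta_1$, which yields a $d_2$ of valuation $v(p)/(p-1)$ that restarts the recursion.

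The main obstacle is careful bookkeeping: confirming at each step that the correct part of Lemma~\ref{L:differences between preimages} applies, and in the reduction step for $v(a)=v(c)>0$ isolating which of the $p$ preimages of a degenerate point exits the degenerate valuation pattern.
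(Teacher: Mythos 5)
Your overall architecture matches the paper's: dispatch easy subcases via Lemmas~\ref{L:stupid lemma} and~\ref{L:stupid lemma 2}, then run a Newton-polygon recursion on the differences $d_n=\alpha_n-\beta_n$ using Lemma~\ref{L:differences between preimages}. The core recursion you set up for $v(a)=v(c)=0$ is essentially the paper's, and the restart mechanism via part~\eqref{I:large v(d)} when $v(\alpha_1)>0$ is a sound (if slightly more elaborate) way to launch it.

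However, there is a genuine gap in your treatment of $v(a)=v(c)>0$. You assert that the degenerate subcase $v(a+c)=pv(c)$ (so $v(\alpha_1)=v(c)$) ``escapes after a few iterations because the $p$ preimages at each level cannot all satisfy the precise cancellation.'' This is false in general. For instance, take $a=b$, the fixed point of $f$ with $v(b)=v(c)$. Then $\alpha_n=b$ for all $n$ is a preimage sequence that stays in the degenerate regime forever, and one can check (when $v(c)$ is small relative to $1/(p-1)^2$, over a sufficiently ramified $K$) that \emph{every} iterated preimage of $b$ stays degenerate, so no branch escapes. Your plan has no argument for this persistent-degenerate case.

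The paper avoids this entirely: rather than claiming the degenerate case escapes, it runs the very same $d_n$-recursion in the persistent case, uniformly for all $v(c)\ge 0$. Concretely, if $v(\alpha_n)=v(c)$ for all $n$, then $v(d_1)=v(c)+1/(p-1)$ by part~\eqref{I:large v(d)} with $d=0$, $y=\alpha_1$; and since $v(d_1)\le pv(c)+p/(p-1)=pv(\alpha_n)-\nu_\infty$ holds because $(p-1)v(c)+1>0$, part~\eqref{I:small v(d)} gives $v(d_n)=v(d_{n-1})/p$ inductively. Your core-case argument (with $v(\beta_n)\ge 0$ from integrality) actually generalizes to $v(a)=v(c)>0$ with essentially no change, so the fix is simply to apply that recursion directly rather than attempting the unproven escape claim. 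Finally, as a minor bookkeeping slip, your opening sentence lists ``$v(a)>0$ with $v(a)\ne v(c)$'' as covered by Lemma~\ref{L:stupid lemma}, but that lemma requires $\min\{v(a),v(c)\}\ne 0$, so it does not apply when $v(c)=0$ --- you do list $v(c)=0$, $v(a)>0$ among the remaining cases two sentences later, but the two passages contradict each other as written.
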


\begin{proof}
Lemmas \ref{L:stupid lemma} and \ref{L:stupid lemma 2}
apply unless $v(a) > v(c) = 0$ or $v(a) = v(c) \ge 0$.
If $v(a)> v(c) = 0$, then $v(\alpha_1)=0$.
So by replacing $a$ by $\alpha_1$ if necessary,
we may assume that $v(a)=0$.
Thus it remains to consider the case $v(a)=v(c) \ge 0$.
If any iterated preimage of $a$ has valuation not $v(c)$, 
then we reduce to a previous case.

So assume that $v(\alpha_n) = v(c)$ for all $n \ge 1$.
We now prove $v(d_n) = (v(c) + 1/(p-1))/p^{n-1}$ for $n \ge 1$
by induction.
First, $v(d_0) = \infty > \ell v(\alpha_1) - \nu_\infty$,
so Lemma~\ref{L:differences between preimages}\eqref{I:large v(d)}
implies $v(d_1) = v(\alpha_1) + 1/(p-1) = v(c) + 1/(p-1) > 0$.
Next, for $n \ge 2$, by the inductive hypothesis, 
$v(d_{n-1}) \le v(d_1) \le p v(c) + p/(p-1) = p v(\alpha_n) - \nu_\infty$,
so Lemma~\ref{L:differences between preimages}\eqref{I:small v(d)}
implies $v(d_n) = v(d_{n-1})/p = (v(c) + 1/(p-1))/p^{n-1}$.
Thus the denominator of $v(d_n)$ tends to infinity,
so $K_\infty/K$ is infinitely ramified.
\end{proof}

\subsection{Tame case}

We now assume that $p \nmid \ell$ (and $v(c) \ge 0$).
Lemma~\ref{L:stupid lemma} handles the case where $v(a) < 0$,
and Theorem~\ref{T:tame v(c)>=0} below will handle the case where $v(a) \ge 0$.
Let $\mm$ (resp.\ $\mm_s$) 
be the maximal ideal of the valuation ring $\calO$ in $K$ (resp.\ $K_s$).
We say that an element $u \in K$ is \defi{periodic} (for $f$) if $f^n(u)=u$ for some $n \ge 1$,
\defi{preperiodic} if $f^m(u)$ is periodic for some $m \ge 0$, 
and \defi{strictly preperiodic} if it is preperiodic but not periodic.
If $u$ is periodic, its \defi{period} is the smallest $n \ge 1$ such that $f^n(u)=u$.
We say that $u,w \in K$ are \defi{in a single cycle} if $u$ is periodic and there exists $n \ge 0$
such that $f^n(u)=w$.
These notions apply also to dynamics of a polynomial map defined over the residue field $\calO/\mm$.

\begin{theorem}
\label{T:tame v(c)>=0}
Suppose that $v(c) \ge 0$ and $v(a) \ge 0$.
\begin{enumerate}[\upshape (a)]
\item \label{I:not in forward orbit mod m}
If $a \bmod \mm$ is not in the forward orbit of $0 \bmod \mm$,
then $K_\infty/K$ is unramified.
\item  \label{I:strictly preperiodic mod m}
If $0 \bmod \mm$ is strictly preperiodic mod $\mm$,
then the ramification index of $K_\infty/K$ divides $\ell$.
\item \label{I:single cycle}
If $0$ and $a$ are in a single cycle, then $K_\infty/K$ is unramified.
\item \label{I:single cycle mod m}
If $0 \bmod \mm$ and $a \bmod \mm$ are in a single cycle mod $\mm$,
but $0$ and $a$ are not both in a single cycle,
then $K_\infty/K$ is infinitely ramified.
\end{enumerate}
\end{theorem}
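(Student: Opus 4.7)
The plan is to identify the mechanism of tame ramification and treat each case. Since $\ell\nmid p$, the extension $K_n(\alpha_{n+1})/K_n$ defined by $\alpha_{n+1}^\ell=\alpha_n+c$ is unramified whenever $v(\alpha_n+c)=0$, because $x^\ell-(\alpha_n+c)$ then has unit constant term and separable reduction. Ramification can only appear when $\bar\alpha_n=-\bar c=f(\bar 0)$, and this condition propagates backward along the path as $\bar\alpha_{n-k}=f^{k+1}(\bar 0)$ for $0\le k\le n$; in particular $\bar a=f^{n+1}(\bar 0)$, placing $\bar a$ in the forward orbit of $\bar 0$. Part~(a) follows immediately: the hypothesis forbids $\bar\alpha_n=-\bar c$ for every iterated preimage, so each step is unramified and induction yields $K_\infty/K$ unramified.

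For part~(b), the main new input is the dynamical lemma that along any path $(\alpha_n)$ the event $\bar\alpha_n=-\bar c$ occurs for at most one $n$: if it occurred at $n<m$, then $\bar\alpha_{n+1}=\bar 0$ (from $\bar\alpha_{n+1}^\ell=\bar 0$), and tracing $\bar f$ forward from $\bar\alpha_m=-\bar c$ down to level $n+1$ gives $\bar 0=f^{m-n}(\bar 0)$, contradicting the strict preperiodicity of $\bar 0$. Hence each path contains at most one ramified step, contributing tame ramification dividing $\ell$. The main obstacle is to show that these contributions from different paths combine into a single divisor of $\ell$; for this I would argue that, up to an unramified twist, all the relevant $\ell$-th-root extensions agree with one $\ell$-cyclic extension $K^{\unr}(\beta^{1/\ell})/K^{\unr}$ for a single $\beta\in K$ (measuring the distance of $a$ from the postcritical orbit of $0$), so the compositum has ramification index still dividing $\ell$.

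For part~(c), the cycle of length $N$ containing $0$ and $a$ in $K$ produces a backbone of iterated preimages $\alpha_n\in K$ cycling through the forward orbit of $a$; in particular some $\alpha_j$ equals $0$, so $\alpha_{j+1}\in K$ satisfies $\alpha_{j+1}^\ell=c$, forcing $c\in K^{\times\ell}$. Since $K(\mu_\ell)/K$ is unramified in the tame case, the level-$1$ preimages lie in an unramified extension; for higher levels I would argue by induction that each non-backbone preimage, obtained from an equation $\beta^\ell=\zeta\alpha_n+c$ with $\zeta\in\mu_\ell$ and $\alpha_n$ a backbone element, generates an unramified extension. The main check is that $v(\zeta\alpha_n+c)\in\ell\Z$ (after a suitable unit rescaling), which follows from $c\in K^{\times\ell}$ together with the cyclic pattern of $v(\alpha_n)$ along the backbone.

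For part~(d), set $g\colonequals f^N$; then $\bar g(\bar a)=\bar a$ but $g(a)\ne a$, and $\bar 0$ lying in the $\bar f$-cycle of $\bar a$ forces $\bar g'(\bar a)=0$. Consider a coherent path of preimages whose residues trace the $\bar f$-cycle with period $N$. Along such a path, the residue $\bar\alpha_n=-\bar c$ recurs every $N$ levels, and each such bad level contributes a further ramification of degree $\ell$: using the Newton-polygon analysis of Lemma~\ref{L:differences between preimages} to compute $v(\alpha_n+c)$ just after a bad level, one finds that the preimage extraction requires adjoining an $\ell$-th root outside the current value group, so the ramification index of $K_n/K$ is multiplied by $\ell$ each cycle. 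Hence the ramification index of $K_n/K$ tends to $\infty$, and $K_\infty/K$ is infinitely ramified. The main obstacle, as in (b), is the careful bookkeeping of these fractional valuations through each cycle traversal.
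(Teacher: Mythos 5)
Your parts (a) and (b) match the paper's in spirit: (a) is sound, and for (b) your ``at most one bad level'' lemma is correct, and the compositum step is valid because the tame inertia group of $K_s/K^{\unr}$ is procyclic, so a compositum of tamely ramified extensions with ramification index dividing $\ell$ still has ramification index dividing $\ell$.

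Your (c) has a real gap. First, the inductive formulation is wrong: not every non-backbone iterated preimage satisfies an equation of the form $\beta^\ell = \zeta\alpha_n + c$ with $\alpha_n$ a backbone element; that only covers preimages of level-one twists $\zeta\alpha_n$, and you need a statement about \emph{all} iterated preimages of $a$. Second, the ``main check'' $v(\zeta\alpha_n+c)\in\ell\Z$ is not justified by $c\in K^{\times\ell}$ together with the valuations along the backbone: the problematic case is $v(c)=0$ and $\bar\zeta\bar\alpha_n=-\bar c$ for some $\zeta\ne1$, which a priori can happen and makes $v(\zeta\alpha_n+c)>0$ with no control on the exact value. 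The paper's key input is a Hensel rigidity statement: since $0\in C_0$, the multiplier $(f^N)'(\alpha)$ vanishes for $\alpha\in C_0$, so $\frac{d}{dx}(f^N(x)-x)=-1$ there, and Hensel's lemma shows the cycle elements are distinct modulo $\mm$ and (by a short induction on the level $r$ with $f^r(\beta)\in C_0$) any iterated preimage $\beta$ of $a$ with $\bar\beta=\bar\alpha$ for some $\alpha\in C_0$ actually \emph{equals} $\alpha$. In particular every iterated preimage reducing to $0$ is $0$, so every root extraction is of a unit or of $0$, and all extensions are unramified. You need this rigidity; the valuation bookkeeping alone does not close the argument.

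Your (d) also leaves the central step unproved: that ``the preimage extraction requires adjoining an $\ell$-th root outside the current value group'' each time the residue hits $-\bar c$. A priori the valuation $v(\alpha_n+c)$ at a bad level could lie in $\ell$ times the current value group, and the Newton polygon of $f(z+y)-f(y)-d$ alone does not rule this out without the extra structure. The paper sidesteps this: by Hensel's lemma there is a unique $b$ with $f^m(b)=b$ and $\bar b=\bar 0$ (where $m$ is the period of $\bar 0$), one chooses the preimage path to avoid $b$ (possible because $b\ne 0$, else $0$ and $a$ would lie on a single cycle), and then the expansion $f^m(b+x)=b+(f^m)'(b)x+x^2R(x)$ with $v((f^m)'(b))>0$ gives $0<v(\beta_{i+1}-b)<v(\beta_i-b)$ for the subsequence $\beta_i$ reducing to $\bar 0$. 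A strictly decreasing sequence of positive rationals in a value group $\frac1e\Z$ is impossible, so the ramification index is unbounded. I recommend you replace your valuation-growth heuristic with this argument.
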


Parts \eqref{I:not in forward orbit mod m} and~\eqref{I:strictly preperiodic mod m}
cover the cases where $0 \bmod \mm$ and $a \bmod \mm$ are \emph{not} in a single cycle mod $\mm$.
Parts \eqref{I:single cycle} and~\eqref{I:single cycle mod m}
cover the cases where $0 \bmod \mm$ and $a \bmod \mm$ \emph{are} in a single cycle mod $\mm$.

\begin{proof}\hfill
\begin{enumerate}[\upshape (a)]
\item
In taking preimages, we are taking $\ell$th roots of units only,
so the extensions are unramified.
\item
For any sequence of preimages $(\alpha_n)_{n \ge 0}$ 
with $\alpha_0=a$ and $f(\alpha_{n+1})=\alpha_n$ for all $n$,
the extension $K(\alpha_0,\alpha_1,\ldots)$ is tamely ramified
of ramification index dividing $\ell$,
since the sequence is obtained by adjoining $\ell$th roots
of elements such that at most one of them is a non-unit
(otherwise $0 \bmod \mm$ would have been periodic).
The field $K_\infty$ is the compositum of these extensions,
so it too is tamely ramified of ramification index dividing $\ell$.
\item
Let $C_0$ be the cycle containing $0$ and $a$.
Let $n$ be the length of $C_0$.
Let $\alpha \in C_0$.
Then $(f^n)'(\alpha)=\prod_{\beta \in C_0} f'(\beta) = 0$,
since $f'(0)=0$.
Thus the derivative of $f^n(x)-x$ at $\alpha$ is $-1$.
By Hensel's lemma, $f^n(x)-x$ has a unique solution in $K$
congruent to $\alpha$ modulo $\mm$.
This applies to every $\alpha \in C_0$,
so the elements of $C_0$ are distinct modulo~$\mm$.

Suppose that $\beta \in K_s$ is an iterated preimage of $a$.
Since $a \in C_0$, there exists $r \ge 0$ such that $f^r(\beta) \in C_0$.
We claim that if $\beta \equiv \alpha \pmod{\mm_s}$ for some $\alpha \in C_0$,
then $\beta = \alpha$.
We use induction on $r$.
If $r=0$, then $\beta \in C_0$, so the previous paragraph implies that 
$\beta=\alpha$.
If $r \ge 1$, then the inductive hypothesis applied to 
$f(\beta) \equiv f(\alpha) \pmod{\mm_s}$ shows that $f(\beta)=f(\alpha)$.
Then $\beta = \zeta \alpha$ for some $\ell$th root of unity $\zeta$.
Thus $\zeta \alpha \equiv \alpha \pmod{\mm_s}$.
If $\alpha \equiv 0 \pmod{\mm_s}$, then $\alpha=0$ by the previous paragraph,
so $\beta = \zeta \alpha = 0 = \alpha$.
Otherwise, $\zeta \equiv 1 \pmod{\mm_s}$.
Since $\ell \ne \Char k$, this implies $\zeta=1$, so $\beta=\alpha$.

The claim shows that all iterated preimages of $a$ that are $0 \bmod \mm_s$
are equal to $0$.
Thus in taking preimages, we are taking $\ell$th roots of units and $0$ only,
so the extensions are unramified.
\item
Let $m$ be the period of $0 \bmod \mm$.
The derivative of $f^m(x)-x \bmod \mm$ at $0$ is $-1$, a unit, 
so by Hensel's lemma, 
there is a unique solution to $f^m(x)-x=0$ that reduces to $0 \bmod \mm$;
call it $b$.

Since $0 \bmod \mm$ and $a \bmod \mm$ are in a single cycle mod $\mm$,
we may choose a sequence of preimages $(\alpha_n)$
(with $\alpha_0=a$ and $f(\alpha_{n+1})=\alpha_n$ for all $n$)
such that $\alpha_n \equiv 0 \bmod \mm_s$ for infinitely many $n$.
We may assume that no $\alpha_n$ is equal to $b$: 
choose the $\alpha_i$ one at a time, and if one of them is $b$,
multiply it by a nontrivial $\ell$th root of unity before proceeding;
this changes it because if $\alpha_i=b=0$, then $0$ is periodic (since $b$ is)
and $a$ is in the forward orbit of $0$ (since $a$ is in the forward orbit of $\alpha_i$, but then $0$ and $a$ would belong to a single cycle, 
contradicting our hypothesis).
Let $\beta_0$, $\beta_1$, \dots be all the numbers in the 
sequence $(\alpha_n)$ that are $0 \bmod \mm_s$.
Thus $f^m(\beta_{i+1})=\beta_i$ for all $i$.

We now prove that $0 < v(\beta_{i+1}-b) < v(\beta_i-b)$ for all $i$.
Let $\epsilon = \beta_{i+1}-b$, so $v(\epsilon)>0$.
We have $f^m(b+x) = b + (f^m)'(b) x + x^2 R(x)$ for some $R(x) \in \calO[x]$.
Substituting $x=\epsilon$ yields
$\beta_i = b + (f^m)'(b) \epsilon \pmod{\epsilon^2}$.
Since $v(\epsilon)>0$ and $v((f^m)'(b))>0$, 
we obtain $v(\beta_i-b) > v(\epsilon) = v(\beta_{i+1}-b) > 0$.

This holds for all $i$, so $K_\infty/K$ is infinitely ramified.\qedhere
\end{enumerate}
\end{proof}

\section{Real case}
\label{S:real case}

\begin{theorem}
\label{T:real}
Let $f(z) = z^k - c \in \R[z]$ for some $k \ge 2$ and $c \in \R^\times$.
Given $a \in \R$, define $K_\infty$ as before.
\begin{enumerate}[\upshape (a)]
\item If $k>2$, then $K_\infty = \C$.
\item \label{I:k=2 and c<2}
If $k=2$ and $c < 2$, then $K_\infty = \C$.
\item If $k=2$ and $c \ge 2$, then $K_\infty$ is $\R$ or $\C$ according to whether
$a \in [-c,c^2-c]$ or not, respectively.
\end{enumerate}
\end{theorem}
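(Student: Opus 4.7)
My plan is to exploit that $K_\infty$ is an intermediate field of $\C/\R$ and hence equals either $\R$ or $\C$; the theorem then reduces to deciding when every iterated preimage of $a$ under $f$ is real.

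For part (a), with $k \ge 3$, I would use that any primitive $k$th root of unity $\zeta_k \in \C$ is non-real. If $a+c \ne 0$, any $k$th root $w \in \C$ of $a+c$ gives $\zeta_k w \in f^{-1}(a) \subseteq K_1$, which is non-real, so $K_1 = \C$. If $a+c=0$, then $f^{-1}(a)=\{0\}$, so $K_1=\R$, but $f^{-2}(a)=f^{-1}(0)$ consists of the $k$th roots of $c$; since $c \ne 0$, this set again contains a non-real element, so $K_2 = \C$.

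For parts (b) and (c), with $k=2$, the key step is to suppose $K_\infty = \R$ and extract constraints on $c$ (and, for (c), on $a$). Set $S \colonequals \Union_{n \ge 0} f^{-n}(a)$, a nonempty subset of $\R$ containing $a$. For every $s \in S$, both preimages $\pm\sqrt{s+c}$ must lie in $S$, which forces $s+c \ge 0$ and ensures both signs appear. Let $\beta = \inf S$ and $\alpha = \sup S$. Then $\beta \ge -c$ (for the reality of preimages), while $\beta \le 0$ and $\alpha \ge 0$ (from the signs of the preimages), so $c \ge 0$ and $-\beta \le c$. Passing $s \to \alpha$ in the inequalities $\sqrt{s+c} \le \alpha$ and $\sqrt{s+c} \le -\beta$ (the latter equivalent to $-\sqrt{s+c} \ge \beta$), continuity of the square root gives $\sqrt{\alpha+c} \le \alpha$ and $\sqrt{\alpha+c} \le c$. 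The first forces $\alpha \ge \alpha^* \colonequals (1+\sqrt{1+4c})/2$, and since $(\alpha^*)^2 = \alpha^* + c$ we get $\sqrt{\alpha+c} \ge \alpha^*$; combined with the second inequality, $\alpha^* \le c$, which after squaring simplifies to $c \ge 2$.

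This immediately proves (b) by contradiction with $c < 2$. For (c), the necessity of $a \in [-c, c^2-c]$ falls out of the same chain: $S \subseteq [\beta, \alpha] \subseteq [-c, c^2-c]$, since $\alpha \le (-\beta)^2 - c \le c^2 - c$. For the converse, assuming $c \ge 2$ and $a \in [-c, c^2-c]$, I would verify the elementary computation $f^{-1}([-c, c^2-c]) = \{z : z^2 \in [0, c^2]\} = [-c, c]$, which is contained in $[-c, c^2-c]$ precisely when $c \ge 2$; induction then yields $f^{-n}(a) \subseteq [-c, c^2-c] \subseteq \R$ for all $n$, so $K_\infty = \R$. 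The main obstacle I anticipate is keeping the $\sup/\inf$ and limit arguments clean (in particular, being careful that $\alpha$ and $\beta$ need not be attained in $S$), but once $\beta \ge -c$ and $\beta \le 0$ are observed the algebra unfolds without incident, and notably the argument handles $c < 0$ automatically by producing the contradiction $c \ge 0$.
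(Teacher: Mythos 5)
Your proof is correct. Both you and the paper isolate the same pivot, the positive fixed point $L = (1+\sqrt{1+4c})/2$ of $f$, and derive the inequality $L \le c$ (equivalently $c \ge 2$) from the assumption that every iterated preimage is real; but the mechanism differs. The paper tracks the single orbit $h^n(a)$ of nonnegative square-root preimages, shows by monotonicity that it converges to $L$, and passes the constraint $c - h^n(a) \ge 0$ to the limit. You instead bound $\alpha = \sup S$ and $\beta = \inf S$ over the entire preimage set $S$ directly, using that both $\pm\sqrt{s+c}$ lie in $S$: this gives $-c \le \beta < 0$, $\alpha \ge L$, and $\sqrt{\alpha+c} \le -\beta \le c$, forcing $L \le c$ with no convergence argument. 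A pleasant dividend of your route is that the necessity direction of part (c) falls out of the same chain of inequalities ($S \subseteq [\beta,\alpha] \subseteq [-c, c^2-c]$), whereas the paper handles the cases $a < -c$ and $a > c^2-c$ by separate short observations. The sufficiency direction of (c) is proved the same way in both: $[-c, c^2-c]$ is backward-invariant when $c \ge 2$. Your part (a) argument matches the paper's (any primitive $k$th root of unity enters $K_\infty$ once a nonzero iterated preimage exists); your explicit case split on $a+c=0$ spells out what the paper leaves terse.

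One small point to make airtight when you write this up: the claim $\beta \le 0$ (which, with $\beta \ge -c$ and $c \ne 0$, yields $c > 0$) deserves a sentence. It suffices to note that some $s \in S$ has $s + c > 0$ (if $a+c>0$ take $s=a$; if $a+c=0$ then $0 \in S$ and reality of $f^{-1}(0)$ already forces $c>0$, so take $s=0$; if $a+c<0$ then $K_1=\C$ outright), whence $-\sqrt{s+c} < 0$ lies in $S$.
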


\begin{proof}
\hfill
\begin{enumerate}[\upshape (a)]
\item There exists a nonzero $\beta \in f^{-n}(a)$ for some $n \ge 1$,
since otherwise $c=0$.
Then for every $k$th root of unity $\zeta$,
we have $\zeta \beta \in f^{-n}(a)$ too, so $\zeta = (\zeta \beta)/\beta \in K_\infty$.
Thus $K_\infty = \C$.
\item
Let $h(x) \colonequals \sqrt{c+x}$; if $x \ge -c$, take the nonnegative square root.
Thus $h(x)$ is strictly increasing on $[-c,\infty)$.

Suppose that $K_\infty=\R$.
Then all iterated preimages are real,
and in particular, $h^n(a) \in \R_{\ge 0}$ for all $n \ge 0$.
Also $c-h^n(a) \ge 0$ for all $n \ge 1$, since $-h^n(a)$ is a preimage of $h^{n-1}(a)$,
and $h(-h^n(a))=\sqrt{c-h^n(a)}$.
In particular, $c \ge c-h(a) \ge 0$.
We assumed $c \ne 0$, so $c>0$.

The fixed points of $f(z)$ are 
$L \colonequals (1+\sqrt{1+4c})/2 > 0$
and $L' \colonequals (1-\sqrt{1+4c})/2 < 0$.
The only solution to $h(x)=x$ in $[0,\infty)$ is $L$,
and $h$ is strictly increasing,
and $h(0)>0$ and $h(x)<x$ for large positive $x$;
thus $x \le h(x) \le L$ for $x \in [0,L]$, 
and $L \le h(x) \le x$ for $x \in [L,\infty)$.
In particular, $(h^n(a))_{n \ge 1}$ is a bounded monotonic sequence,
so it converges.
The limit is a nonnegative fixed point of $h$, so the limit is $L$.

On the other hand, the hypothesis $c<2$ implies that $L>c$, so
$h^n(a)>c$ for sufficiently large $n$.
This contradicts $c-h^n(a) \ge 0$.
\item
The hypothesis $c \ge 2$ implies that $L \le c$.
If $x \in [-c,c^2-c]$, then $c+x \ge 0$, and $\sqrt{c+x} \le \sqrt{c^2} = c$; also, $c \le c^2-c$,
so $h(x),-h(x) \in [-c,c^2-c]$.
Iterating shows that if $a \in [-c,c^2-c]$, 
then all iterated preimages are real, so $K_\infty=\R$.

If $a<-c$, then $h(a) \notin \R$, so $K_\infty=\C$.

If $a>c^2-c$, then $h(a) > c$, contradicting the inequality $c-h(a) \ge 0$ derived in the proof of~\eqref{I:k=2 and c<2}, so $K_\infty=\C$.
\qedhere
\end{enumerate}
\end{proof}

\section*{Acknowledgements} 

We thank Robert L. Benedetto, Robert Harron, and Yevgeny Zaytman.
We thank also the referees for many insightful suggestions. 
This research began at the workshop ``The Galois theory of orbits in arithmetic dynamics''
organized by Rafe Jones, Michelle Manes, and Joseph Silverman 
at the American Institute of Mathematics.

\begin{bibdiv}
\begin{biblist}


\bib{Aitken-Hajir-Maire2005}{article}{
   author={Aitken, Wayne},
   author={Hajir, Farshid},
   author={Maire, Christian},
   title={Finitely ramified iterated extensions},
   journal={Int. Math. Res. Not.},
   date={2005},
   number={14},
   pages={855--880},
   issn={1073-7928},
   review={\MR{2146860}},
   doi={10.1155/IMRN.2005.855},
}

\bib{Boston-Jones2007}{article}{
   author={Boston, Nigel},
   author={Jones, Rafe},
   title={Arboreal Galois representations},
   journal={Geom. Dedicata},
   volume={124},
   date={2007},
   pages={27--35},
   issn={0046-5755},
   review={\MR{2318536}},
   doi={10.1007/s10711-006-9113-9},
}

\bib{Boston-Jones2009}{article}{
   author={Boston, Nigel},
   author={Jones, Rafe},
   title={The image of an arboreal Galois representation},
   journal={Pure Appl. Math. Q.},
   volume={5},
   date={2009},
   number={1},
   pages={213--225},
   issn={1558-8599},
   review={\MR{2520459}},
   doi={10.4310/PAMQ.2009.v5.n1.a6},
}

\bib{Cullinan-Hajir2012}{article}{
   author={Cullinan, John},
   author={Hajir, Farshid},
   title={Ramification in iterated towers for rational functions},
   journal={Manuscripta Math.},
   volume={137},
   date={2012},
   number={3-4},
   pages={273--286},
   issn={0025-2611},
   review={\MR{2875279}},
   doi={10.1007/s00229-011-0460-y},
}

\bib{Hindes2016}{misc}{
   author={Hindes, Wade},
   title={Average Zsigmondy sets, dynamical Galois groups, and the Kodaira--Spencer map},
   date={2016-03-14},
   note={Preprint, \texttt{arXiv:1603.04459v1}},
}

\bib{Ingram2013}{article}{
   author={Ingram, Patrick},
   title={Arboreal Galois representations and uniformization of polynomial
   dynamics},
   journal={Bull. Lond. Math. Soc.},
   volume={45},
   date={2013},
   number={2},
   pages={301--308},
   issn={0024-6093},
   review={\MR{3064415}},
   doi={10.1112/blms/bds088},
}

\bib{Jones2008}{article}{
   author={Jones, Rafe},
   title={The density of prime divisors in the arithmetic dynamics of
   quadratic polynomials},
   journal={J. Lond. Math. Soc. (2)},
   volume={78},
   date={2008},
   number={2},
   pages={523--544},
   issn={0024-6107},
   review={\MR{2439638}},
   doi={10.1112/jlms/jdn034},
}

\bib{Jones2013}{article}{
   author={Jones, Rafe},
   title={Galois representations from pre-image trees: an arboreal survey},
   language={English, with English and French summaries},
   conference={
      title={Actes de la Conf\'erence ``Th\'eorie des Nombres et
      Applications''},
   },
   book={
      series={Publ. Math. Besan\c con Alg\`ebre Th\'eorie Nr.},
      publisher={Presses Univ. Franche-Comt\'e, Besan\c con},
   },
   date={2013},
   pages={107--136},
   review={\MR{3220023}},
}

\bib{Jones-Manes2014}{article}{
   author={Jones, Rafe},
   author={Manes, Michelle},
   title={Galois theory of quadratic rational functions},
   journal={Comment. Math. Helv.},
   volume={89},
   date={2014},
   number={1},
   pages={173--213},
   issn={0010-2571},
   review={\MR{3177912}},
   doi={10.4171/CMH/316},
}

\bib{Katz1986}{article}{
   author={Katz, Nicholas M.},
   title={Local-to-global extensions of representations of fundamental
   groups},
   language={English, with French summary},
   journal={Ann. Inst. Fourier (Grenoble)},
   volume={36},
   date={1986},
   number={4},
   pages={69--106},
   issn={0373-0956},
   review={\MR{867916}},
}

\bib{Odoni1985a}{article}{
   author={Odoni, R. W. K.},
   title={On the prime divisors of the sequence $w_{n+1}=1+w_1\cdots
   w_n$},
   journal={J. London Math. Soc. (2)},
   volume={32},
   date={1985},
   number={1},
   pages={1--11},
   issn={0024-6107},
   review={\MR{813379}},
   doi={10.1112/jlms/s2-32.1.1},
}

\bib{Odoni1985b}{article}{
   author={Odoni, R. W. K.},
   title={The Galois theory of iterates and composites of polynomials},
   journal={Proc. London Math. Soc. (3)},
   volume={51},
   date={1985},
   number={3},
   pages={385--414},
   issn={0024-6115},
   review={\MR{805714}},
   doi={10.1112/plms/s3-51.3.385},
}

\bib{Odoni1997}{article}{
   author={Odoni, R. W. K.},
   title={On the Galois groups of iterated generic additive polynomials},
   journal={Math. Proc. Cambridge Philos. Soc.},
   volume={121},
   date={1997},
   number={1},
   pages={1--6},
   issn={0305-0041},
   review={\MR{1418355}},
   doi={10.1017/S0305004196001168},
}

\bib{Sen1972}{article}{
   author={Sen, Shankar},
   title={Ramification in $p$-adic Lie extensions},
   journal={Invent. Math.},
   volume={17},
   date={1972},
   pages={44--50},
   issn={0020-9910},
   review={\MR{0319949}},
}

\bib{SerreLocalFields1979}{book}{
   author={Serre, Jean-Pierre},
   title={Local fields},
   series={Graduate Texts in Mathematics},
   volume={67},
   note={Translated from the French by Marvin Jay Greenberg},
   publisher={Springer-Verlag},
   place={New York},
   date={1979},
   pages={viii+241},
   isbn={0-387-90424-7},
   review={\MR{554237 (82e:12016)}},
}

\bib{Stoll1992}{article}{
   author={Stoll, Michael},
   title={Galois groups over $\mathbf{Q}$ of some iterated polynomials},
   journal={Arch. Math. (Basel)},
   volume={59},
   date={1992},
   number={3},
   pages={239--244},
   issn={0003-889X},
   review={\MR{1174401}},
   doi={10.1007/BF01197321},
}

\end{biblist}
\end{bibdiv}

\end{document}